\theoremstyle{plain}
\newtheorem{theorem}{Theorem}[section]
\newtheorem{corollary}[theorem]{Corollary}
\theoremstyle{definition}
\newtheorem{definition}[theorem]{Definition}
\theoremstyle{remark}
\newtheorem{remark}[theorem]{Remark}
\DeclareMathOperator*{\argmin}{argmin}
\icmltitlerunning{Learning to Remove Cuts in Integer Linear Programming}
\begin{document}

\twocolumn[
\icmltitle{Learning to Remove Cuts in Integer Linear Programming}

\icmlsetsymbol{equal}{*}
\begin{icmlauthorlist}
\icmlauthor{Pol Puigdemont}{equal,raclette,paella}
\icmlauthor{Stratis Skoulakis}{equal,raclette}
\icmlauthor{Grigorios G Chrysos}{burger}
\icmlauthor{Volkan Cevher}{raclette}
\end{icmlauthorlist}

\icmlaffiliation{raclette}{LIONS, École Polytechnique Fédérale de Lausanne, Switzerland}
\icmlaffiliation{paella}{Work developed during an exchange coming from Universitat Politècnica de Catalunya (UPC), Spain}
\icmlaffiliation{burger}{Department of Electrical and Computer Engineering, University of Wisconsin-Madison, USA}

\icmlcorrespondingauthor{Pol Puigdemont}{polpuigdemont@gmail.com}

\icmlkeywords{Machine Learning, ICML}

\vskip 0.3in
]

\printAffiliationsAndNotice{\icmlEqualContribution} %

\begin{abstract}
Cutting plane methods are a fundamental approach for solving integer linear programs (ILPs). In each iteration of such methods, additional linear constraints (cuts) are introduced to the constraint set with the aim of excluding the previous fractional optimal solution while not affecting the optimal integer solution. In this work, we explore a novel approach within cutting plane methods: instead of \emph{only} adding new cuts, we also consider the \emph{removal} of previous cuts introduced at any of the preceding iterations of the method under a learnable parametric criteria. We demonstrate that in fundamental combinatorial optimization settings such cut removal policies can lead to significant improvements over both human-based and machine learning-guided cut addition policies even when implemented with simple models.
\end{abstract}

\section{Introduction}
\label{sec: introduction}
Integer linear programming (ILP) has numerous applications in engineering~\cite{MT60}, operational research~\cite{Eiselt}, and finance~\cite{Konno}. In fact, any combinatorial optimization problem can be formulated as an ILP~\citep{conforti2014integer}. ILP is a constrained optimization formulation in which the variables need to take integer values while satisfying some linear constraints. More precisely, an ILP with $n$ variables and $m$ linear constraints admits the form,
\begin{equation}\label{eq:MILP}
z^\star_{\mathrm{int}}:=\min_{x\in \mathbb{R}^n}\{ c^\top x\::\:Ax \leq b,~~x_j\in \mathbb{Z}\},
\end{equation}
where $c\in \mathbb{Z}^n$, $A\in \mathbb{Z}^{m \times n}$ and $b \in \mathbb{Z}^m$.

Despite the fact that ILPs are $\mathcal{NP}$-hard \cite{mip_np_hard}, heuristics, such as the \textit{cutting plane method}~\cite{gomory_cuts} or {branch and bound}~\cite{branch,ZJ0B21} have proven to be extremely valuable.

The \textit{cutting plane method}, proposed by Gomory~\cite{gomory_cuts}, is one of the most fundamental approaches for solving ILPs. The idea is to iteratively solve relaxed versions of the original problem (\ref{eq:MILP}) by dropping the integrality requirement, thus obtaining a linear program (LP) that is computationally tractable to solve. More precisely, by dropping the integrality constraints, we obtain the following LP: 
\begin{equation}\label{eq:lp}
z^\star_{\mathrm{frac}}:=\min_{x\in \mathbb{R}^n}\{c^\top x\::\:Ax \leq b, \:x \in \mathbb{R}^n\}.
\end{equation}
It is clear that $z^\star_{\mathrm{frac}} \leq z^\star_{\mathrm{int}}$. The cornerstone idea of the cutting plane methods is to tighten the bound $z^\star_{\mathrm{frac}}$ by adding cutting planes \cite{gomory_cuts} that increase $z^\star_{\mathrm{frac}}$ but leave $z^\star_{\mathrm{int}}$ unchanged.

Modern solvers \cite{scip8, gurobi} use cutting planes to tighten the bounds on the linear programs (LPs) that are solved iteratively in the cutting plane method \cite{gomory_cuts} and the branch-and-bound method \cite{branch_and_bound_paper}. There are multiple types of cutting planes that can increase the value of the respective LP relaxation. Namely, Gomory showed that for ILPs with $m$ constraints, there are as much as $m$ possible Gomory cuts~\cite{gomory_cuts} (Apendix \ref{subsec:gomory-cuts} contains details on how to obtain Gomory cuts). Although adding all possible cutting planes would yield stronger tightening of the LP relaxation and thus faster convergence, the number of constraints would grow exponentially over the iterations, making the problem infeasible \cite{WesselmannS12}. Consequently, all cutting plane methods select a small number of possible cuts to add at each iteration~\cite{balas,amaldi,andreello,tang2020reinforcement, paulus2022learning}.

In practice, the \textit{cut addition policy}—determining which cut to add at each iteration—plays a crucial role in the convergence properties of the method and over several hand-crafted heuristics have been proposed~\cite{balas,andreello,amaldi,Coniglio}. For instance, the SCIP solver \cite{scip8} employs a weighted sum of features such as integer support and parallelism to rank the cuts. 

A more recent line of research focuses on machine learning-based cut addition policies that, after training, can adjust to the problem distributions of interest (see \cite{summary_LearningCuts} for a survey). These approaches leverage different machine learning techniques, including imitation learning \cite{paulus2022learning}, reinforcement learning \cite{wang2023learning, tang2020reinforcement}, and multiple instance learning \cite{huang2021learning}. It is worth noting that irrespective of the specific training strategy, all previous approaches concentrate on cut addition policies that, at each iteration, include a small set of new cuts in the constraints.

\subsection{Our Approach and Results} 

In this work, we explore cutting plane methods with learning-guided policies that do not only add cuts but also possess the capability to remove them. To the best of knowledge, this is the first work examining such cut removal policies in the context of cutting plane methods.

As already mentioned, adding multiple cutting planes at each iteration of the cut selection method is very beneficial with respect to convergence since larger sizes of the fractional polytope are removed. However, the latter leads to an exponential increase in the number of linear constraints~\cite{WesselmannS12}.

We consider the concept of \textit{cut removals} as a strategy to mitigate the exponential growth in the number of constraints, while still capitalizing on the rapid convergence rates associated with incorporating multiple cuts~\cite{WesselmannS12}. Specifically, in each iteration of the cutting plane method, we opt to include all potential Gomory cuts in the set of linear constraints. As previously noted, this approach provides the advantage of excluding larger regions from the feasibility polytope. Subsequently, we proceed with the cut removal step, where previously introduced cuts are eliminated from the set of linear constraints. This ensures that the overall number of linear constraints increases by just one cut (or a small constant number) from iteration to iteration.

\textbf{Cut Addition vs Cut Removal} As mentioned earlier, our approach differs significantly from previous \textit{cut addition} policies \cite{tang2020reinforcement,paulus2022learning}. Unlike these policies, our method introduces multiple cuts at each iteration but also removes multiple cuts to achieve a balanced increase. In contrast, cut addition policies introduce a small number of new cuts in the constraints, leading to a gradual increment in the number of cuts with each iteration. 

A fundamental distinction between the two approaches lies in the permanent addition of cuts. In cut addition, once a cut is incorporated into the linear constraints, it persists in all subsequent iterations of the method. However, a cut that may have been effective in the early iterations may lose its relevance as more cuts are introduced. Our cutting removal approach addresses this limitation by continually assessing the effectiveness of each cut. This allows us to replace multiple outdated cuts from early iterations with fresh cuts. 

\smallskip
\smallskip
\noindent \textbf{Learning to Remove Cuts} As in the context of cut addition, the strategy for removing cuts plays a crucial role in the convergence properties of our method. An intuitive measure of cut quality is the difference in the LP bound with and without the cut~\cite{Coniglio,paulus2022learning}. Therefore, a straightforward cut removal strategy is to eliminate cuts with the small difference in the LP bound difference. However computing the LP difference for all candidate cuts requires solving numerous LPs which leads to a significant computational overhead.

To address this challenge, we adopt an imitation learning approach, similar to that of \cite{paulus2022learning} in the case of cut addition. Specifically, we train a simple model that uses hand-crafted features of the cuts~\cite{Achterberg2007,WesselmannS12} and an additional MLP layer to predict the LP bound difference for each candidate cut.

Interfacing with an implementation of the Cutting Plane method with Gomory Cuts \cite{gomory_cuts} we experiment on five families of MILPs by training the neural networks to predict the quality of various cuts and remove them acoordingly. Our experimental evaluations indicate that our algorithm outperforms cut addition strategies.

\subsection{Related Work}

There is a recent line of works examining the applications of machine learning techniques in cutting plane methods. \citet{RaduBMT18} were the first to employ machine learning models for predicting the bound improvement of cuts in the context of semi-definite programming. In the realm of Integer Linear Programming, \citet{tang2020reinforcement} utilize reinforcement learning to devise cut addition strategies for Gomory cuts. \citet{huang2021learning} develop alternative cut addition strategies through multiple instance learning. \citet{paulus2022learning} employ imitation learning to create efficient cut selection policies, approximating the computationally expensive \textit{look-ahead policy} that calculates the LP bound improvement for each candidate cut. In a related but slightly orthogonal task, \citet{wang2023learning} leverage the RL framework to determine the order of cuts presented in the LP solver, minimizing the solving time of the LP. \citet{summary_LearningCuts} provides a great survey in recent work for learning to add cuts in the cutting plane method.

At the same time machine learning techniques have been used in the context of other approaches of ILPs such as \textit{Column Generation}~\cite{CKWS22} and \textit{Branch$\&$Bound}~\cite{CKGLP21,GGKM0B20,KBSND16,K16,ZJ0B21}. On the theoretical front \cite{BPSV22,BalcanPSV22,BalcanPSV21} study sample complexity and generalization bounds for cutting plane methods.

\section{Background and Preliminaries}
\label{sec:background}
We denote with $\mathbb{R}$ the set of real numbers and with $\mathbb{Z}$ the set of integer numbers. To simplify notation, we describe the linear constraints ${Ax \leq b}$ with a set of hyperplanes $\mathcal{H}$. More precisely, each hyperplane $(\alpha, \beta) \in \mathcal{H}$ (where $\alpha \in \mathbb{Z}^n$ and $\beta \in \mathbb{Z}$) denotes the hyperplane $\alpha^\top x \leq \beta$.

With a slight abuse of notation, we will write that an $n$-dimensional vector $x\in \mathcal{H}$ if and only if $\alpha^\top x \leq \beta$ for all $(\alpha,\beta)\in \mathcal{H}$. We also denote with $(\mathcal{H},c)$ an instance of Integer Linear Program, $\min_{x\in \mathbb{Z}^n}\{ c^\top x~:~x \in \mathcal{H}\}$. 

We denote with $x^\star_{\mathrm{frac}}:= \mathrm{argmin}_{x \in \mathbb{R}^n}\{c^\top x:x \in \mathcal{H}\}$
the optimal fractional solution of $(\mathcal{H},c)$ and with $x^\star_{\mathrm{int}}:= \mathrm{argmin}_{x \in \mathbb{Z}^n}\{c^\top x:x \in \mathcal{H}\}$ the optimal integral solution. We remark that $x^\star_{\mathrm{frac}}$ can be computed in polynomial time~\cite{linear}. On the contrary computing $x^\star_{\mathrm{int}}$ is an $\mathcal{NP}$-hard problem.

Notice that
$c^\top x^\star_{\mathrm{frac}} \leq c^\top x^\star_{\mathrm{int}}$ and thus solving the fractional problem provides a lower bound on the cost of optimal integral solution. In Definition~\ref{d:cuts} we introduce the notion of cutting plane $(\alpha,\beta)$ that will be crucial throughout the paper.

\begin{definition}\label{d:cuts}
Let the instance $(\mathcal{H},c)$. A hyperplane $(\alpha,\beta)$ is called \textit{cutting plane} if and only if
\[\alpha^\top x_{\mathrm{frac}}^\star > \beta~~~~ \text{ and }~~~~ \alpha^\top x_{\mathrm{int}}^\star \leq  \beta \]
\end{definition}
In case $(\alpha,\beta)$ is a cutting plane, then the new instance $\left(\mathcal{H} \cup (\alpha,\beta),c\right)$ admits a higher optimal fractional value but the same integral optimal value as $(\mathcal{H},c)$. More precisely,
 \[c^\top x_{\mathrm{frac}}^\star \leq c^\top \hat{x}_{\mathrm{frac}}^\star \leq c^\top x_{\mathrm{int}}^\star \]
where $\hat{x}^\star_{\mathrm{frac}}:= \mathrm{argmin}_{x \in \mathbb{R}^n}\{c^\top x:x \in \mathcal{H} \cup (\alpha,\beta)\}$.

\begin{algorithm}[b]
\caption{Cutting plane method}
\begin{algorithmic}[1] 
    \STATE \textbf{Input:} An integer linear program $(\mathcal{H},c)$    
\smallskip
\STATE Set $\mathcal{P}_{1} := \varnothing$
\smallskip
\FOR{ $k=1, \ldots,$}
    \STATE Compute the \textit {fractional solution} of $(\mathcal{H} \cup \mathcal{P}_k,c)$
        \[x_k^\star:= \mathrm{argmin}_{x\in \mathbb{R}^n}\{c^\top x ~:~ x \in \mathcal{H} \cup \mathcal{P}_k\}\]

\STATE \textbf{if} $x_{k}^\star$ is integral ~~\textbf{return} $x_{k}^\star$.

    \smallskip
    \STATE Compute cutpool $\mathcal{C}_k$ and pick $(\alpha_k,\beta_k) \in \mathcal{C}_k$. 
\smallskip
    \STATE Set $\mathcal{P}_{k+1} := \mathcal{P}_{k} \cup (\alpha_k,\beta_k)$ \textcolor{blue}{\textit{$\#$ insert new constraint}}
\ENDFOR
\label{alg:CP}
\end{algorithmic}
\end{algorithm}

In his seminal work, \citet{gomory_cuts} showed that if $(\mathcal{H}, c)$ admits a strictly fractional solution $x_{\mathrm{frac}}^\star$ then there exists a set of separating hyperplanes $\mathcal{C}$, i.e. each hyperplane $(\alpha,\beta)\in \mathcal{C}$ is a separating hyperplane. 

\begin{theorem}\label{t:cuts}[\citet{gomory_cuts}] Let an instance $(\mathcal{H},c)$ with a strictly fractional solution $x_{\mathrm{frac}}^\star$. Then there exists a set of hyperplanes $\mathcal{C}$ with $|\mathcal{C}| = |\mathcal{H}|$ such that any
$(\alpha, \beta) \in \mathcal{C}$ is a cutting plane. $\mathcal{C}$ is also referred as cutpool.

\end{theorem}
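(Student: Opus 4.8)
The plan is to reproduce Gomory's construction of fractional cuts from an optimal simplex tableau and to check directly that each resulting hyperplane meets the two conditions of Definition~\ref{d:cuts}. After the usual reduction to nonnegative standard form---introducing one slack per constraint so that $\{x : Ax \le b\}$ becomes $\{(x,s): Ax + s = b,\ x,s \ge 0\}$, which preserves integrality because $A$ and $b$ are integral---I would solve the relaxation in polynomial time to obtain an optimal basic feasible solution with a basis of size $m = |\mathcal{H}|$. This basis yields the tableau identities
\[
x_{B_i} + \sum_{j \in N} \bar a_{ij}\, x_j \;=\; \bar b_i, \qquad i = 1,\dots,m,
\]
where $N$ indexes the nonbasic variables (all equal to $0$ at the optimum) and $\bar b_i$ is the optimal value of the $i$-th basic variable.

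Next I would extract one inequality from each row. Setting $f_{ij} := \bar a_{ij} - \lfloor \bar a_{ij}\rfloor \in [0,1)$ and $f_i := \bar b_i - \lfloor \bar b_i\rfloor \in [0,1)$, the expression $x_{B_i} + \sum_{j} \lfloor \bar a_{ij}\rfloor x_j - \lfloor \bar b_i\rfloor$ is an integer at every integer-feasible point, and by the tableau identity it equals $f_i - \sum_{j} f_{ij} x_j$. Because the $f_{ij}$ and the $x_j$ are nonnegative, this integer is at most $f_i < 1$, hence at most $0$; rearranging gives the Gomory inequality
\[
\sum_{j \in N} f_{ij}\, x_j \;\ge\; f_i .
\]
Clearing denominators and substituting the slacks back in terms of $x$ turns this into a hyperplane $(\alpha,\beta)$ over the original variables with integral coefficients, as required by the notation of the paper.

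It then remains to verify the two defining properties. Validity at the integer optimum, $\alpha^\top x^\star_{\mathrm{int}} \le \beta$, is precisely the rounding argument just given, since every integer-feasible point satisfies $\sum_j f_{ij} x_j \ge f_i$. Separation of the fractional optimum, $\alpha^\top x^\star_{\mathrm{frac}} > \beta$, holds for each row with fractional $\bar b_i$: at the optimal vertex the nonbasic $x_j$ all vanish, so the left-hand side of the Gomory inequality is $0$ while the right-hand side is $f_i > 0$, a strict violation. The hypothesis that $x^\star_{\mathrm{frac}}$ is strictly fractional forces at least one basic value $\bar b_i$ to be non-integral, so at least one genuine cutting plane exists; collecting the inequalities produced by the $m = |\mathcal{H}|$ tableau rows gives the cutpool $\mathcal{C}$ of the claimed size.

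I expect the conceptual core---the integrality/rounding step---to be short and clean; the main friction will be the bookkeeping around the reduction to standard form and the translation of the cut back to the original variable space (eliminating slacks and clearing denominators to obtain integral $(\alpha,\beta)$), together with the care needed to state the count correctly: every row yields a candidate, so $|\mathcal{C}| = |\mathcal{H}|$, but the strict-separation property is guaranteed only for the fractional rows, which strict fractionality ensures are nonempty.
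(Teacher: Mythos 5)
Your construction is exactly the one the paper itself sketches (Appendix~\ref{subsec:gomory-cuts}): derive the fractional Gomory inequality from each row of the optimal simplex tableau, check validity at every integer-feasible point by the rounding argument, check strict violation at the fractional vertex because the nonbasic variables vanish there, and eliminate the slacks to return to the original variable space. Your closing caveat is also correct and worth keeping: only rows with fractional $\bar b_i$ yield genuine cutting planes in the sense of Definition~\ref{d:cuts} (integral rows give $f_i = 0$ and hence no strict separation), so the theorem's claim that all $|\mathcal{H}|$ members of $\mathcal{C}$ are cutting planes is really $|\mathcal{C}| \le |\mathcal{H}|$ with at least one genuine cut guaranteed by strict fractionality --- a looseness in the statement, not a gap in your argument.
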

\citet{gomory_cuts} proposed the seminal \textit{cutting plane method} (Algorithm~\ref{alg:CP}) that solves ILPs by iteratively adding new cutting planes to the constraints.

The cornerstone idea of the cutting plane method is that in case the solution $x_k^\star$ at iteration $k$ is not integral then the cutting plane $(\alpha_k,\beta_k)$ added at Step~$6$ will render $x_k^\star$ infeasible while keeping the optimal integer solution $x^\star_{\mathrm{int}}$ of $(\mathcal{H},c)$ intact. As a result, Algorithm~\ref{alg:CP} guarantees that
$c^\top x_{k+1}^\star \geq c^\top x_{k}^\star$ until a final integral solution is reached. 
\smallskip

\textbf{Cut Selection} We remark that regardless the way $(\alpha_k,\beta_k)$ is selected at Step~$6$, Algorithm~\ref{alg:CP} is always guaranteed to converge to the optimal integral solution. However in practice the cut selection policy plays a major role on the convergence properties of the cutting plane method. As a result, over the years various handcrafted heuristics have been proposed~\cite{balas,andreello,amaldi,Coniglio}.

One of the most natural heuristics to select cutting planes is the one that directly \textit{looks ahead} on the improvement of the fractional LP bound~\cite{amaldi,Coniglio,paulus2022learning}. As in \citet{paulus2022learning} we will refer to this as the \textit{look-ahead policy}.  More precisely,
\noindent
\small
\[(\alpha_k,\beta_k):= \mathrm{max}_{\textcolor{blue}{(\alpha,\beta)}\in \mathcal{C}_k}\left[ \min_{x} \{c^\top x: x \in \mathcal{H}_k \cup \mathcal{P}_k\cup\textcolor{blue}{(\alpha,\beta)}\right]. \]
\normalsize

On the positive side, \textit{look-ahead policy} admits very favorable convergence properties to the optimal integral solution~\cite{paulus2022learning}. On the negative side, implementing the above cut selection policy is very time-consuming since it requires the solution of $|C_k|$ linear programs at each iteration $k$ which creates a significant computational overhead. 
\paragraph{Evaluating Cut Selection Policies}
We can utilize the number of iterations until convergence to the optimal integer solution as a measure of performance of a cutting plane method. However cutting plane methods can take a lot of iterations before converging to the the optimal integral solution, rendering the latter metric not a very practical~\cite{tang2020reinforcement,paulus2022learning}. 

An alternative performance metric is to measure how far is the solution of method at iteration $k$ is from the optimal integer solution. The latter is referred as \textit{integrality gap}~\cite{tang2020reinforcement,paulus2022learning} which at iteration $k$ is defined as
$g_k:= c^\top x_{\mathrm{int}}^\star - c^\top x_{k}^\star \geq 0$. 

Different MILP problems, even from the same distribution, will have different magnitudes for $g_k$. To compare different instances we can measure the factor by which the integrality gap is closed between the first relaxation and the current round. At iteration $k$ of the CP method, the \emph{integrality gap closure} (IGC) \cite{tang2020reinforcement,paulus2022learning} is defined as
\begin{equation} \label{eq:igc}
    IGC_k := \frac{g_1 - g_k}{g_1} = \frac{c^\top x_k^\star - c^\top x_1^\star}{c^\top x_{\mathrm{int}}^\star - c^\top x_1^\star} \in [0,1].
\end{equation}

\section{Cut Removal Policies}
\label{sec:methodology}
The starting point of this work concerns the design of novel cutting plane methods that, at each iteration, not only add cuts but are also capable of removing cuts. As we shall see shortly, the latter offers significant advantages compared to cutting plane methods that only add cuts at each iteration.

\textbf{Adding Multiple Cuts} Notice that at each iteration of Algorithm~\ref{alg:CP}, only one new cutting plane $(\alpha_k, \beta_k)$ is added. However, it would make perfect sense at iteration $k$ to include the entire cutpool $\mathcal{C}_k$ of Theorem~\ref{t:cuts}. The latter would result in an even greater increase in the objective function and, consequently, better convergence properties. However since $|\mathcal{C}_k|$ can have size up to $|\mathcal{H}|+|\mathcal{P}_k|$, the latter would lead to an exponential increase in the number of constraints, rendering the solution of the linear program at Step~$4$ impossible. This the reason that previous cutting plane methods have focused on adding just one or small constant number of cutting planes at each iteration~\cite{tang2020reinforcement,huang2021learning,paulus2022learning}.

\textbf{Cut Removal} Our approach revolves around addressing the exponential surge resulting from the inclusion of multiple cuts in each iteration. We incorporate an additional cut-removal procedure to prevent the number of constraints from escalating exponentially. In Algorithm~\ref{alg:CP2}, we outline the fundamental pipeline of our approach.
\begin{algorithm}
\caption{Cutting Plane method with Cut Removal}
\begin{algorithmic}[1] 
    \STATE \textbf{Input:} An integer linear program $(\mathcal{H},c)$    
\smallskip
\STATE Set $\mathcal{P}_{1} := \varnothing$
\smallskip

\FOR{ $k=1, \ldots,$}
    \STATE Compute the cutpool $\mathcal{C}_k$ for $(\mathcal{H}\cup \mathcal{P}_k,c)$.

\smallskip
        
    \STATE Compute the \textit {fractional solution} of $(\mathcal{H}\cup \mathcal{P}_{k} \cup \mathcal{C}_k,c)$
        \[x_k^\star:= \mathrm{argmin}_{x\in \mathbb{R}^n}\{c^\top x : x \in \mathcal{H}\cup \mathcal{P}_{k} \cup \mathcal{C}_k\}\]
    \textit{\textcolor{blue}{$\#$ include all possible cuts}}
    \smallskip
    \smallskip
   \STATE \textbf{if} $x_{k}^\star$ is integral ~~\textbf{return} $x_{k}^\star$.
    
    \smallskip
        \smallskip

    \STATE Select a subset $\hat{P}_{k+1} \subseteq \mathcal{P}_k \cup \mathcal{C}_k$ with $|\hat{P}_{k+1}| = k+1$.\\
    \textit{\textcolor{blue}{$\#$ removal of cuts}}
    \smallskip
    \smallskip
   \STATE $\mathcal{P}_{k+1} := \hat{P}_{k+1} \cup \{ c^\top x \geq \lceil c^\top x_k^\star \rceil\}$
   \\
    \textit{\textcolor{blue}{$\#$ new constraints for the next iteration}} 
 \smallskip    
\ENDFOR
\label{alg:CP2}
\end{algorithmic}
\end{algorithm}

Let us explain how the Algorithm~\ref{alg:CP2} is able to combine both the advantages of selecting multiple cuts at each iteration while at the same time avoiding the exponential increase in the number of linear constraints.

In Step~$4$, Algorithm~\ref{alg:CP2} computes the cutpool $\mathcal{C}_k$ (see Definition~\ref{d:cuts}) for the problem $(H \cup \mathcal{P}_k,c)$ that includes the original constraints $\mathcal{H}$ plus the additional cuts $\mathcal{P}_k$ that have been added so far.

In Step 5, Algorithm~\ref{alg:CP2} calculates the optimal solution $\hat{x}_k$ for the instance $(H \cup \mathcal{P}_k \cup \mathcal{C}_k,c)$
meaning that the algorithm completely incorporates all the cutpool $\mathcal{C}_k$. To this end we remark that due to Definition~\ref{d:cuts}, the optimal integral solution of $(\mathcal{H},c)$ is the same with the optimal integral solution of $(\mathcal{H}\cup \mathcal{P}_k \cup \mathcal{C}_k ,c)$. Namely,
\[\argmin_{x\in \mathbb{Z}^n}\{c^\top x:x \in \mathcal{H}\} = \argmin_{x\in \mathbb{Z}^n}\{c^\top x:x \in \mathcal{H}\cup \mathcal{P}_k \cup \mathcal{C}_k\}.\]
The idea behind this step is to achieve a substantial improvement in the objective function compared to the strategy of introducing just one cut $(\alpha,\beta) \in \mathcal{C}_k$. In other words,
\noindent
\small
\[\min_{x\in \mathbb{R}^n}\{c^\top x:x \in \mathcal{H}\cup \mathcal{P}_k \cup \mathcal{C}_k\} \geq \min_{x\in \mathbb{R}^n}\{c^\top x:x \in \mathcal{H}\cup \mathcal{P}_k \cup (\alpha,\beta)\}\]
\normalsize
where the last inequality comes from the fact that the left problems admit a superset of constraints with respect to the right one.

In Step 6 of Algorithm~\ref{alg:CP2}, the cut removal process is executed to prevent an exponential rise in the number of constraints. To be more specific, only $k+1$ cuts from the set of previous cuts, $\mathcal{P}_k \cup \mathcal{C}_k$, are kept. This ensures that only one additional cut is introduced in each iteration, preserving the tractability of the solution obtained in Step 4 of the linear programming formulation.

Step 7 plays a crucial role in ensuring the algorithm's monotonic improvement. Note that $\mathcal{P}_k$ may not necessarily be a subset of $\hat{\mathcal{P}}_k$, implying that a cut introduced in a previous iteration can be removed in a subsequent iteration if it becomes uninformative. The positive aspect of this approach lies in the ability to replace inactive cuts from earlier iterations with more valuable ones. On the downside, as $\mathcal{P}_k$ is not guaranteed to be a subset of $\hat{\mathcal{P}}_{k+1}$, there is a potential for the objective function to decrease. To mitigate this, a special constraint $c^\top x \geq \lceil c^\top x_k^\star \rceil$ is included. This constraint ensures that the fractional value of the problem $(\mathcal{H}\cup \mathcal{P}_{k+1}, c)$ can only increase with respect to the fractional value of the problem $(\mathcal{H}\cup \mathcal{P}_{k}, c)$.  
\begin{remark} The additional special constraint $c^\top x \geq \lceil c^\top x_k^\star \rceil$ does not affect the optimal integral solution. The latter is formally stated and proven in Corollary~\ref{c:1}.
\end{remark}

\begin{corollary}\label{c:1}
It holds that
$\argmin_{x\in \mathbb{Z}^n}\{c^\top x:x \in \mathcal{H}\cup \mathcal{P}_{k+1}\} = \argmin_{x\in \mathbb{Z}^n}\{c^\top x:x \in \mathcal{H}\}$
\end{corollary}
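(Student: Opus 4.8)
The plan is to show that the distinguished optimal integral point $x^\star := x_{\mathrm{int}}^\star$ of $(\mathcal{H},c)$ is feasible for the tighter instance $(\mathcal{H}\cup \mathcal{P}_{k+1},c)$ and that this tighter instance cannot admit a strictly cheaper integral point. Since $\mathcal{P}_{k+1}$ only adds constraints, the region $\mathcal{H}\cup \mathcal{P}_{k+1}$ is contained in the feasible region of $\mathcal{H}$; hence every integral point feasible for $\mathcal{H}\cup \mathcal{P}_{k+1}$ is also feasible for $\mathcal{H}$, which immediately gives $\min_{x\in\mathbb{Z}^n}\{c^\top x : x\in \mathcal{H}\cup\mathcal{P}_{k+1}\} \geq c^\top x^\star$. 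The entire content of the statement is therefore the reverse inequality, namely that $x^\star$ survives all the newly imposed constraints, and I would split this according to the two kinds of constraints in $\mathcal{P}_{k+1} = \hat{P}_{k+1}\cup\{c^\top x \geq \lceil c^\top x_k^\star\rceil\}$.

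First I would handle the cuts $\hat{P}_{k+1}\subseteq \mathcal{P}_k\cup\mathcal{C}_k$. By Definition~\ref{d:cuts} together with the fact already established in the surrounding discussion that $\argmin_{x\in\mathbb{Z}^n}\{c^\top x:x\in\mathcal{H}\} = \argmin_{x\in\mathbb{Z}^n}\{c^\top x:x\in\mathcal{H}\cup\mathcal{P}_k\cup\mathcal{C}_k\}$, the point $x^\star$ is an optimal, and in particular feasible, integral point of $(\mathcal{H}\cup\mathcal{P}_k\cup\mathcal{C}_k,c)$. Consequently $x^\star$ satisfies every constraint in $\mathcal{P}_k\cup\mathcal{C}_k$, and a fortiori every constraint in the subset $\hat{P}_{k+1}$. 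This step is essentially bookkeeping once the preservation of the integral optimum under the full cutpool is granted.

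The only genuinely new ingredient is the special constraint $c^\top x \geq \lceil c^\top x_k^\star\rceil$, and I expect this to be the crux. Here $x_k^\star$ is the LP optimum of $(\mathcal{H}\cup\mathcal{P}_k\cup\mathcal{C}_k,c)$ computed in Step~5, and since $x^\star$ is an integral feasible point of that same instance, the LP relaxation lower-bounds its cost: $c^\top x_k^\star \leq c^\top x^\star$. Because $c\in\mathbb{Z}^n$ and $x^\star\in\mathbb{Z}^n$, the value $c^\top x^\star$ is an integer, so an integer that is at least $c^\top x_k^\star$ must also be at least $\lceil c^\top x_k^\star\rceil$; that is, $c^\top x^\star \geq \lceil c^\top x_k^\star\rceil$, and hence $x^\star$ satisfies the special constraint as well.

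Combining the two previous paragraphs shows $x^\star$ is feasible for $(\mathcal{H}\cup\mathcal{P}_{k+1},c)$, so $\min_{x\in\mathbb{Z}^n}\{c^\top x:x\in\mathcal{H}\cup\mathcal{P}_{k+1}\}\leq c^\top x^\star$, which together with the superset-of-constraints inequality from the first paragraph forces equality of the two optimal values; since $x^\star$ attains this value while remaining feasible, it is a minimizer of the tighter instance, yielding the claimed identity of the $\argmin$ sets. The one place demanding care is the integrality argument for the ceiling constraint: it is precisely the rounding $c^\top x^\star \geq \lceil c^\top x_k^\star\rceil$ that guarantees the artificially injected lower bound never cuts off the optimal integral solution, and this is where the hypotheses $c\in\mathbb{Z}^n$ and the integrality of $x^\star$ are indispensable.
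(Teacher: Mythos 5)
Your proposal is correct and follows essentially the same route as the paper's proof: both arguments reduce to (i) the retained cuts $\hat{P}_{k+1}\subseteq\mathcal{P}_k\cup\mathcal{C}_k$ not cutting off the integral optimum (via Definition~\ref{d:cuts} and the inductive preservation of the optimum under $\mathcal{P}_k$), and (ii) the LP-relaxation bound $c^\top x_k^\star \leq c^\top x^\star_{\mathrm{int}}$ combined with integrality of $c$ to justify rounding up to $\lceil c^\top x_k^\star\rceil$. Your phrasing in terms of feasibility of the distinguished optimal point, rather than the paper's chain of inequalities on optimal values, is only a cosmetic repackaging.
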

\begin{proof}
The reason is that the following holds
\begin{eqnarray*}
\min_{x\in \mathbb{Z}^n}\{c^\top x:x \in \mathcal{H}\} &=& \min_{x\in \mathbb{Z}^n}\{c^\top x:x \in \mathcal{H}\cup \mathcal{P}_k\}\\
&=&\min_{x\in \mathbb{Z}^n}\{c^\top x:x \in \mathcal{H}\cup \mathcal{\hat{P}}_{k+1}\}\\
&\geq& \underbrace{\min_{x\in \mathbb{R}^n}\{c^\top x:x \in \mathcal{H}\cup \mathcal{\hat{P}}_{k+1}}_{c^\top x_k^\star}\}
\end{eqnarray*}
The first equality holds inductively and the second comes from the fact that $\hat{\mathcal{P}}_{k+1} \subseteq \mathcal{H} \cup \mathcal{P}_k$. Notice that since $c \in \mathbb{Z}^n$, $\min_{x\in \mathbb{Z}^n}\{c^\top x:x \in \mathcal{H}\}\geq \lceil c^\top x_{k}^\star \rceil$. Thus
$\min_{x\in \mathbb{Z}^n}\{c^\top x:x \in \mathcal{H} \cup \mathcal{\hat{P}}_{k+1}\}\geq \lceil c^\top x_k^\star \rceil$. As a result, adding the additional constraint $c^\top x \geq  \lceil c^\top x_k^\star \rceil$ does not affect the optimal integral solution.
\end{proof}

\subsection{Learning to Remove Cuts} In this section, we will examine what constitutes a reasonable cut removal criterion for Step 6 of Algorithm~\ref{alg:CP2} that can contribute to fast convergence properties. Motivated by the \textit{look-ahead} policy presented in Section~\ref{sec:background}, the most intuitive cut-removal criterion is the one that maximizes the objective function. Namely:
\[\hat{P}_{k+1}:=\mathrm{argmax}_{\textcolor{blue}{\mathcal{P} \subseteq \mathcal{P}_k \cup \mathcal{C}_k}} \left[c^\top x:x \in \mathcal{H}\cup 
 \textcolor{blue}{\mathcal{P}}\right].\]
Unfortunately, there is no efficient algorithm for solving the problem above. Consequently, the most natural approach is to consider the \textit{greedy look-ahead criterion} where we keep the $k+1$ cuts of $\mathcal{P}_k \cup \mathcal{C}_k$ that lead to highest bounds in the objective function and remove the rest. More precisely, we associate each cutting plane $(\alpha, \beta) \in \mathcal{P}_k \cup \mathcal{C}_k$ with the following score,
\begin{eqnarray}\label{eq:1}
\mathrm{SC}(\alpha,\beta)&=& \min_{x\in \mathbb{R}^n}\{c^\top x~:~x\in \mathcal{H}\cup \mathcal{P}_k \cup \mathcal{C}_k\}\\
&-& \min_{x\in \mathbb{R}^n}\{c^\top x~:~x\in \mathcal{H}\cup \mathcal{P}_k \cup \mathcal{C}_k/ (\alpha,\beta)\},\nonumber
\end{eqnarray}
which measures the difference of the fractional LP value once $(\alpha,\beta) \in \mathcal{P}_k \cup \mathcal{C}_k$ is removed. Then $\hat{\mathcal{P}}_{k+1}$ is defined as the $k+1$ cuts of $\mathcal{P}_k \cup \mathcal{C}_k$ with the highest score.

\textbf{Parametrizing Cut Removal}
Unfortunately, similar to the case of single-cut addition, the cut removal policy of Eq.~(\ref{eq:1}) is highly computationally demanding to implement since it requires solving numerous linear programs. To circumvent this additional computational overhead, we will approximate the scores of Eq.~(\ref{eq:1}) through an adequately parametrized model $\pi_\theta(\cdot)$. More precisely, for each cut $(\alpha, \beta)\in \mathcal{P}_k \cup \mathcal{C}_k$ we compute the model $\pi_\theta(\cdot)$ to compute the scores
\[
\mathrm{SC}(\alpha,\beta)= \pi_{\theta}(\mathcal{H},\mathcal{P}_k,\mathcal{C}_k, (\alpha,\beta)).\]
More precisely, we first convert each cut $(\alpha,\beta)$ into a $14$-dimensional feature vector. Due to the fact that the coordinate of the feature vectors involve also the optimal fractional solution $x_k^\star := \min \{c^\top x: x\in H \cup \mathcal{P}_k \cup \mathcal{C}_k \}$ these feature embeddings naturally encode the overall structure of the linear constraints. The details of feature vectors can be found in Appendix~\ref{subsec:feature-encoding}.  The encoded state is then fed to a Multi Layer Perceptron (MLP) with a sigmoid ($\sigma$) activation.

A summary of our approach is illustrated in Figure~\ref{fig:cutremoval}.

\begin{figure}[t]
  \vskip 0.2in
  \begin{center}
    \includegraphics[width=0.91\columnwidth]{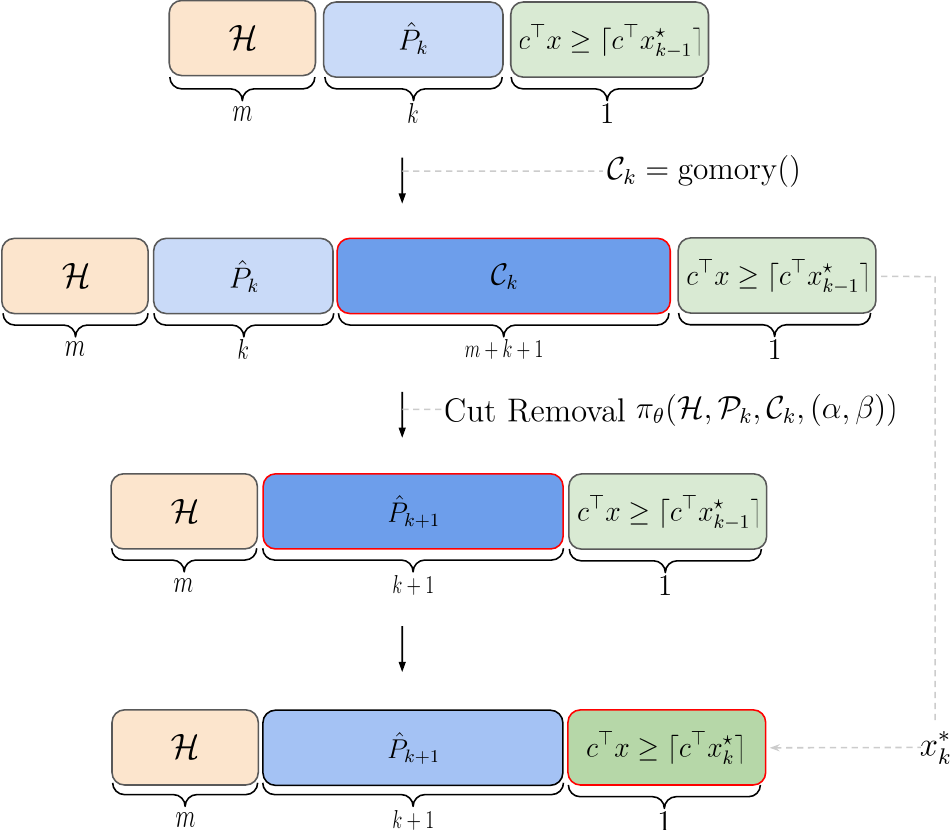}  
    \caption{Visual representation of Algorithm \ref{alg:CP2}.}
    \label{fig:cutremoval}
  \end{center}
  \vskip -0.2in
\end{figure}

\begin{remark}
More complicated models such as graph neural networks over a graph encoding of the state as in \citet{paulus2022learning, gasse} could also be used. Although graph architectures draw an interesting line of improvements they are outside the scope of this study.
\end{remark}

\subsection{Training the Model}
The goal of training process is to approximately select the parameters $\theta$ so as to predict the scores of Eq.~(\ref{eq:1}). In other words select $\theta$ such that
\begin{eqnarray*}\label{eq:1-approximation}
&&\pi_{\theta}(\mathcal{H},\mathcal{P}_k,\mathcal{C}_k, (\alpha,\beta))\simeq \min_{x}\{c^\top x~:~x\in \mathcal{H}\cup \mathcal{P}_k \cup \mathcal{C}_k\}\\
&&- \min_{x}\{c^\top x~:~x\in \mathcal{H}\cup \mathcal{P}_k \cup \mathcal{C}_k/ \{\alpha^\top x \leq \beta\}\}.\nonumber
\end{eqnarray*}   
We note that \citet{paulus2022learning} adopt a similar imitation learning approach to approximate the scores of the \textit{look-ahead policy} presented in Section~\ref{sec:background}.

\paragraph{Dataset Generation}
In order to generate our training data we use trajectories produced by the cutting plane method with the \textit{look-ahead policy} for various classes of ILPs of such as Packing, Bin Packing, Set Cover, Max Cut and Production Planning.

For each intermediate ILP produced at iteration $k$, denoted as $(\mathcal{H} \cup \mathcal{P}_k, c)$, we utilize the corresponding cut pool $\mathcal{C}_k$ and the previously added cuts $\mathcal{P}_k$ to generate the following data set. For every cutting plane $(\alpha, \beta) \in \mathcal{C}_k \cup \mathcal{P}_k$, a new data point is created with a value equal to the \textit{normalized LP improvement},
\[\frac{c^\top x^\star_{k\cup (\alpha,\beta)} - c^\top x^\star_{k}}{c^\top x^\star_{k}} > 0\]
where $x_k^\star:= \argmin_{x}\{c^\top x:x\in \mathcal{H}\cup \mathcal{P}_k\}$ and $x_{k\cup(\alpha,\beta)}^\star:= \argmin_{x}\{c^\top x:x\in \mathcal{P}\cup \mathcal{C}_k\cup (\alpha,\beta)\}$.
The reason that we use the normalized LP improvement over the actual LP improvement $c^\top x^\star_{k\cup (\alpha,\beta)} - c^\top x^\star_{k}$ is to get target values that are not as dependent on the specific instance of the problem family.

Once all training data are generated, we solve a regression problem with quadratic loss to choose the parameters $\theta$.

\section{Experiments} \label{sec:experiments}
We divide our experiments in two main parts. The first one focuses on evaluating the performance of cut removal acting against multiple benchmark policies by rolling them out on synthetic test MILP instances for each of the problem families in a controlled environment. Next, we investigate how well do our trained models generalize to larger instances.

\begin{figure*}[t]
\centering
\includegraphics[width=0.8\textwidth]{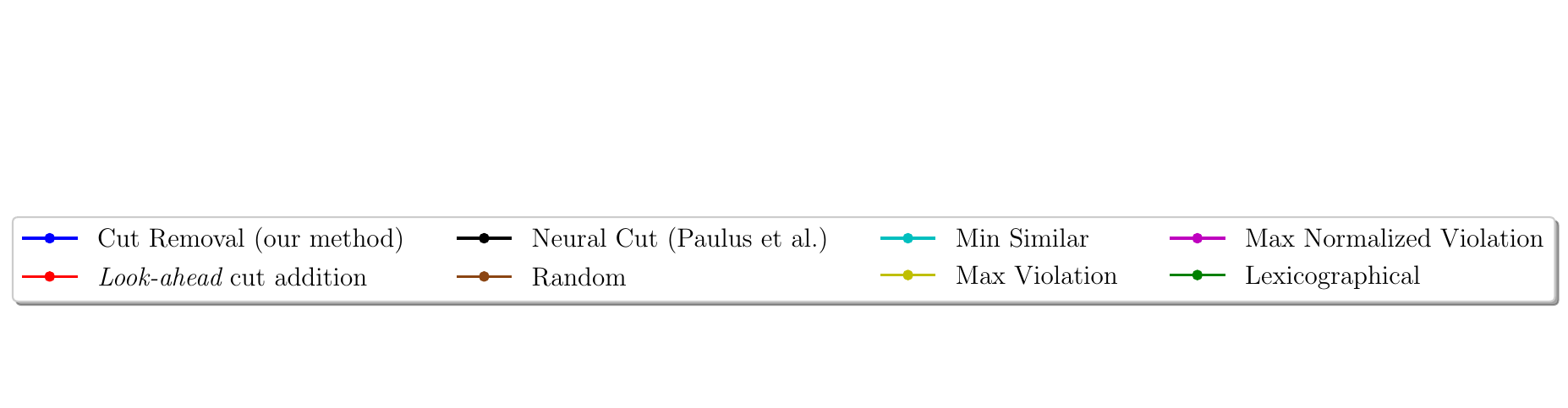}\\
\vspace{-17pt}
\includegraphics[width=0.325\textwidth]{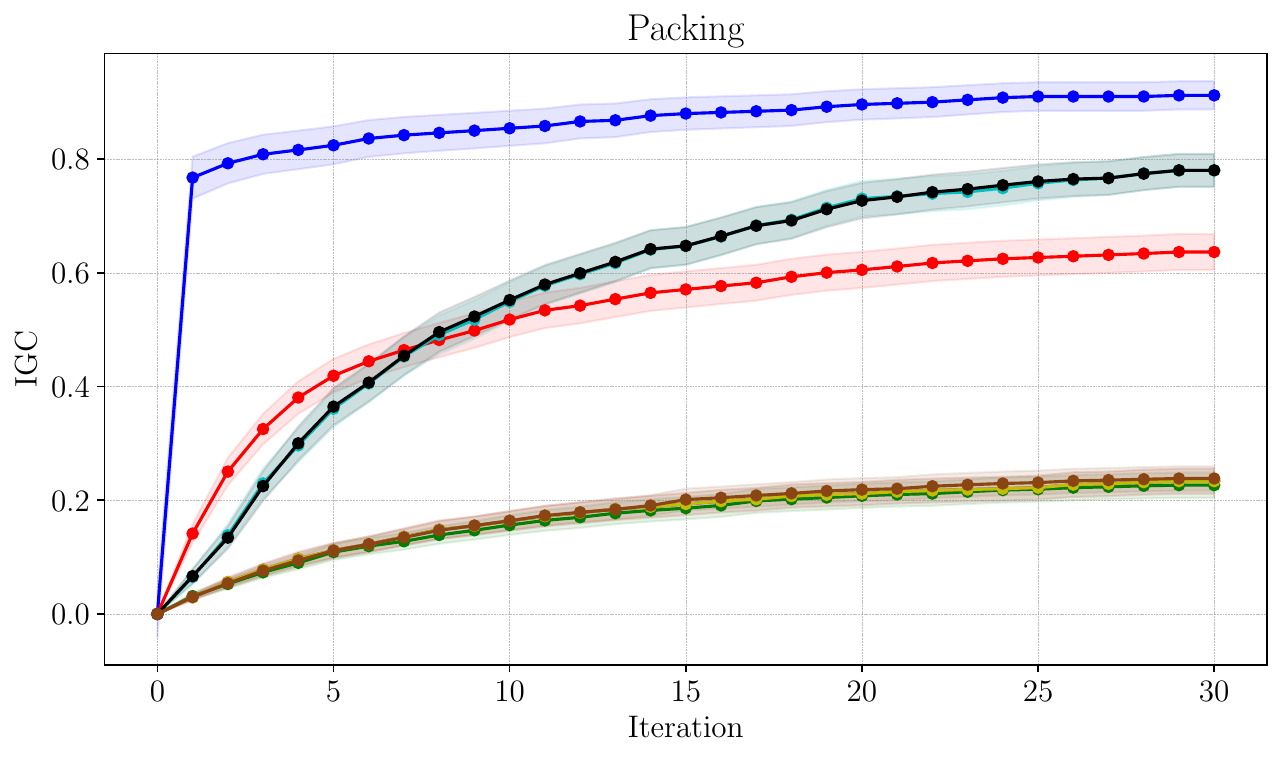}
\includegraphics[width=0.325\textwidth]{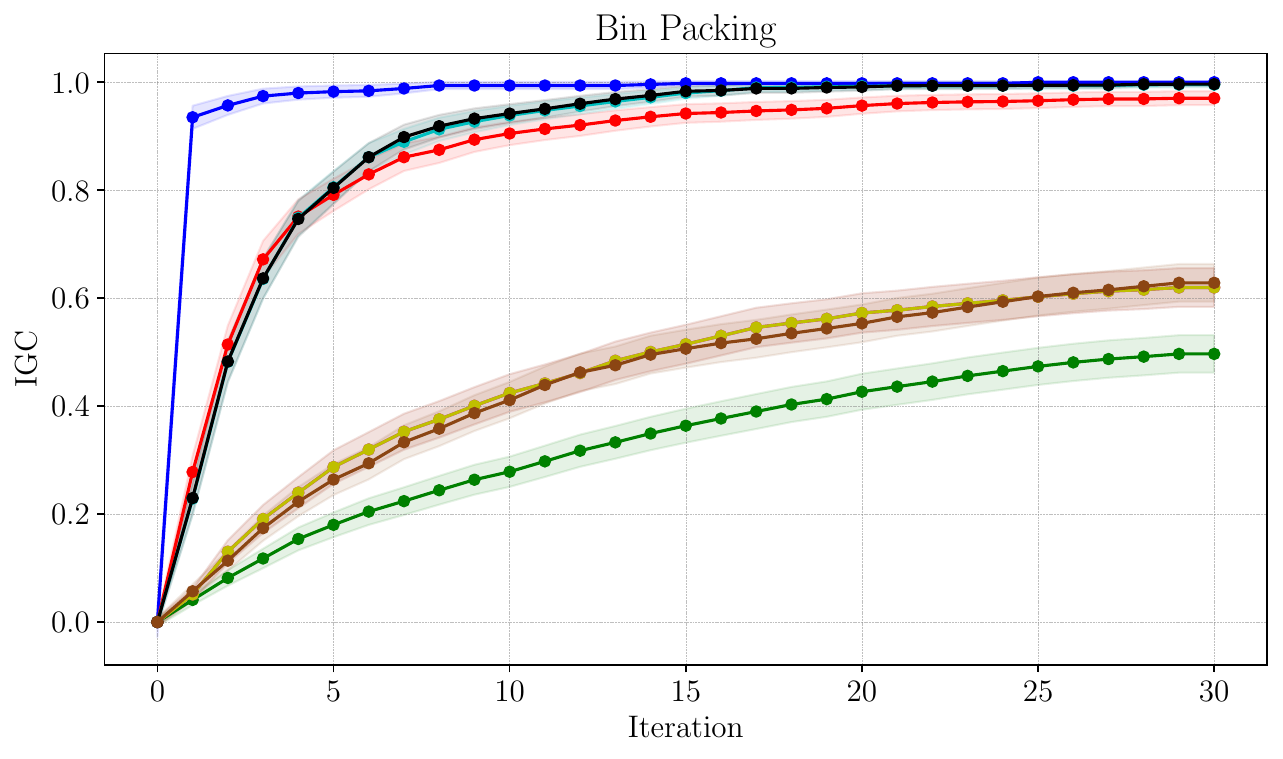}
\includegraphics[width=0.325\textwidth]{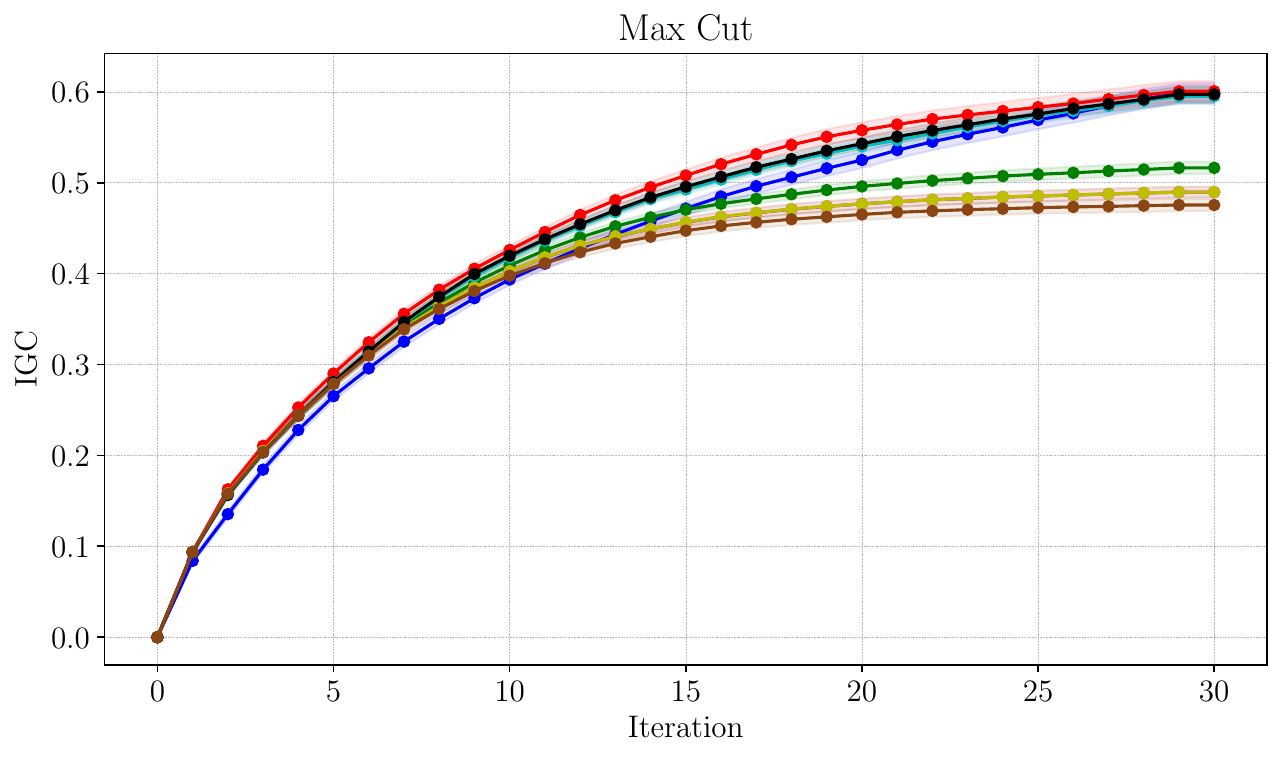}\\
\includegraphics[width=0.325\textwidth]{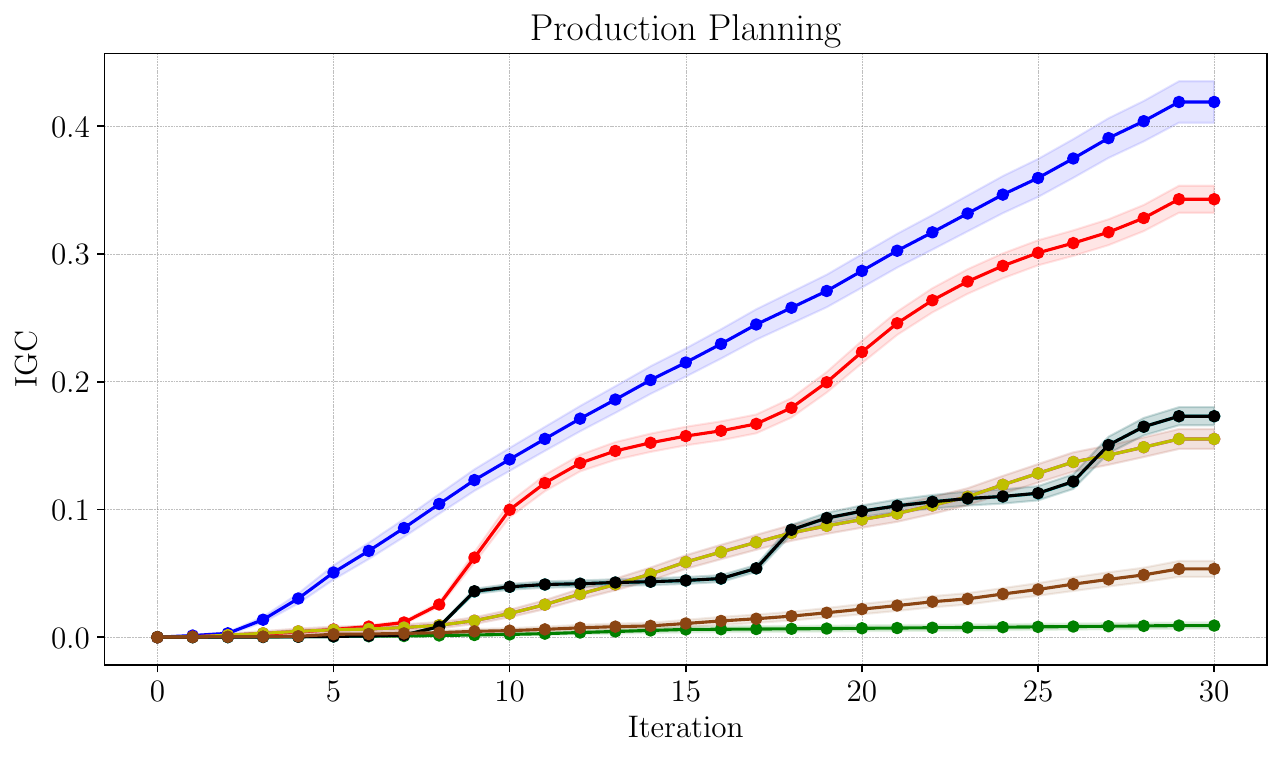}
\includegraphics[width=0.325\textwidth]{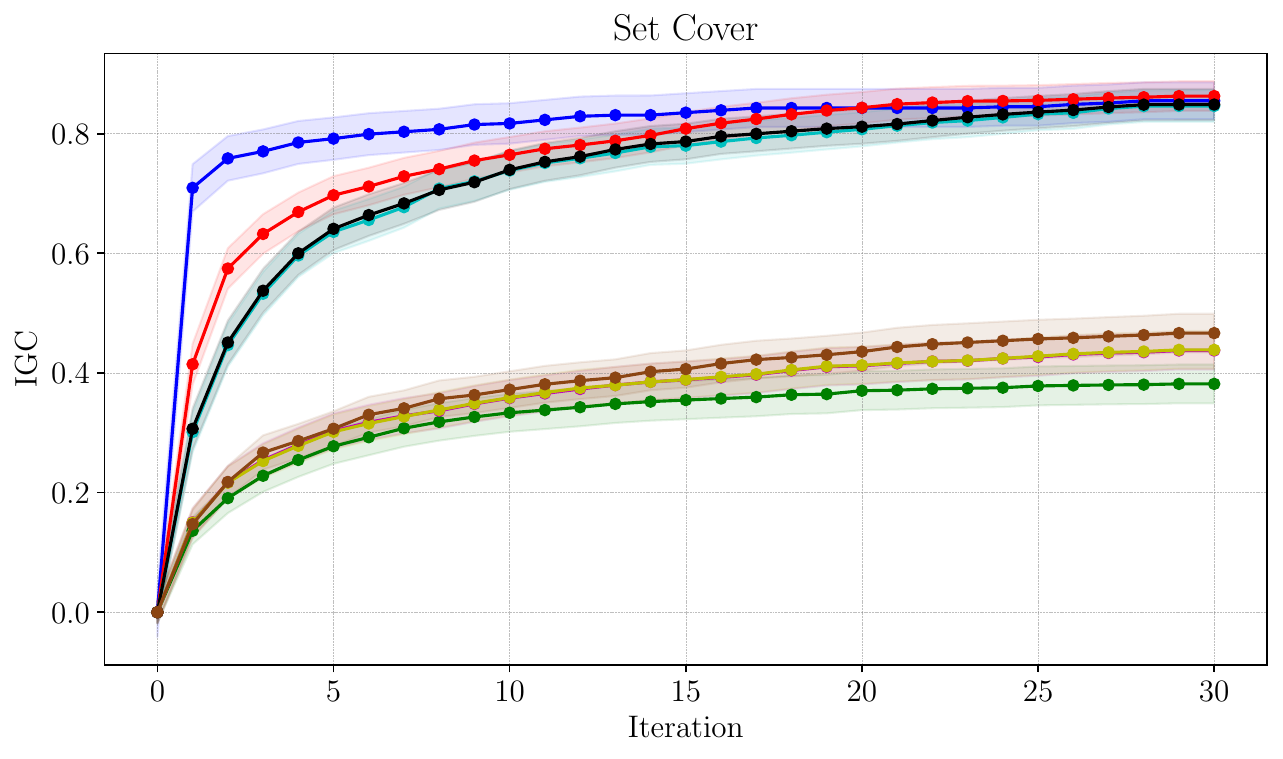}\\
\vspace{-10pt}
\caption{\textbf{Mean IGC for the benchmark instances:} We report the mean Integral Gap Closed (IGC) that measures quality of the solution with respect to the optimal integer solution (see Section~\ref{sec:background} for details), a policy achieving larger IGC values with fewer iterations is better. The highlighted region represents the variance. Our cut removal algorithm outperforms or matches all cut addition methods in all benchmarks. For cut addition policies the $\textit{look-ahead}$ outperforms all of the others except in Packing where Neural Cut, this behavior matches the results showcased in the equivalent benchmark of \citet{paulus2022learning}.}
\label{fig:Experiment_1}
\end{figure*}

\subsection{Evaluation Setup}
As in~\citet{tang2020reinforcement,paulus2022learning}, we assess the performance of the various cutting plane methods by considering the improvement over the Integral Gap Closure value (IGC) (see Section~\ref{sec:background}) with respect to the number of iterations. This evaluation metric offers a robust estimate of the actual running time of the cutting plane methods, as the primary bottleneck in running time arises from solving linear programs (LPs). Simultaneously, this metric provides a cleaner benchmark since running time significantly depends on factors such as implementation, the choice of backbone solver, and available compute resources.

\citet{paulus2022learning} employ imitation learning to train a cut addition policy, mimicking the behavior of the \textit{look-ahead policy} discussed in Section~\ref{sec:background}. Their study demonstrates that the imitation learning approach outperforms the reinforcement learning method proposed by \citet{tang2020reinforcement}. However, both approaches are surpassed by the \textit{look-ahead policy}, which involves solving numerous LPs at each iteration. Unfortunately, neither \citet{paulus2022learning} nor \citet{tang2020reinforcement} provide a public implementation of their code. To ensure a fair comparison, we consider the \textit{look-ahead policy}, which outperforms both previous approaches, and an in-house implementation of the \textit{Neural Cut} method proposed by \citet{paulus2022learning} using the same feature encoding as in our model (see Appendix~\ref{subsec:feature-encoding}). We also include several human-crafted heuristics such as the min similar, max normalized violation, max violation, lexicographical and random (see Appendix~\ref{subsec:baseline-heuristics} for the definitions).

\smallskip

\begin{remark}\label{rem:environment}
\citet{paulus2022learning} utilize the SCIP solver, which incorporates instance pre-solving and various types of cuts~\cite{Achterberg2007}. To ensure a fair benchmarking environment focused solely on cut decisions, we benchmark on an implementation of the Cutting Plane method from scratch. The implementation considers only Gomory Cuts without any pre-solving modes.
\end{remark}

In order to stress-test our environment and compute the optimal solution required to obtain the IGC metrics we use the SCIP solver \cite{scip8}.

Our models are trained with \cite{sgd} using Pytorch \cite{pytorch} for the implementation.

\subsection{Datasets} \label{subsec:dataset}
We experiment with five families of MILPs: packing, bin packing, max cut, production planning (as in \citet{paulus2022learning, tang2020reinforcement}) and set cover. For each family, instances are generated randomly. Packing, bin packing, max cut and production planning are generated under the random formulations used in \citet{tang2020reinforcement, paulus2022learning}. For set cover we suggest our own probabilistic formulation. Details on the generation of the instances can be found in the Appendix \ref{subsec:dataset-dimensions}.

\subsection{In Distribution Evaluation} \label{subsec:core-experiment}
The core experiments in this work aim to respond the following question: Can our cut removal acting algorithm (after training) surpass the $\textit{look-ahead}$, \citet{paulus2022learning} and other cut addition baselines on unseen instances? We answer the previous question positively on the five different ILP benchmarks.

\paragraph{ILP Benchmarks}
We generate a total of $3000$ instances for each of the five problem families. Regarding the dimensions, the small and medium instances suggested in \citet{tang2020reinforcement} were too small for our comparisons and they were being solved after too few iterations to extract differences. For this reason we only consider large instances ($\sim$100 variables, 100 constraints) as done in \citet{paulus2022learning}.

For each problem family, we use $2000$ instances for training, $500$ instances for validation and $500$ instances for testing as done in \citet{paulus2022learning}. We collect trajectories of the $\textit{look-ahead}$ policy for the train and validation instances. We train these parametric models $\pi_\theta$ with these trajectories to predict the bound improvement as previously specified in Section \ref{sec:methodology}. Finally, the various cutting plane methods are compared in terms of their performance on the test instances.

The dimensions (variables, constraints) for training, validation and testing are the same for each problem. The specifications for each problem can be found in Appendix \ref{subsec:dataset-dimensions}.

\paragraph{Test Evaluation Metrics}
In order to assess which method is best we compare the mean IGC after each iteration across all instances as done in \citet{paulus2022learning, tang2020reinforcement}. Larger IGC with fewer iterations denotes better performance. The IGC is $0$ at the start and $1$ in case of convergence to the optimal intergral solution.

We don not compare execution times because the focus of our experiments relies in evaluating how large is the improvement at each round as in \citet{paulus2022learning, tang2020reinforcement}. A time detailed evaluation would depend on the LP solver, pre-solving of the instances, ordering of the constraints \cite{wang2023learning} and other factors which are outside the scope of this evaluation. Our experimental findings are presented in Figure~\ref{fig:Experiment_1}.

\paragraph{Our Experimental Results} 
After training on the test instances, we select the best $\pi_\theta$ according to the validation loss.
Our cut removal algorithm outperforms all cut addition policies in packing, bin-packing, production planning and set cover families. For packing and production planning the mean IGC of our method outpeforms significantly the comparing methods. For bin-packing and set cover the mean IGC is significantly larger in the first half of the iterations. We observe that this match in performance occurs in IGC values close to optimal pointing that in mean the algorithms are able to reach the optimal integral solution before iteration 30.

For max cut cut removal acting does not outperform the expert and neural cut addition methods. Nevertheless, the differences in performance are very small, for instance, a random policy performs as well as the best policies in the first third of the iterations. This behaviour is consistent with the findings in \citet{paulus2022learning}.

\begin{figure*}[t]
\centering
\includegraphics[width=0.8\textwidth]{plots/legend.pdf}\\
\vspace{-17pt}
\includegraphics[width=0.33\textwidth]{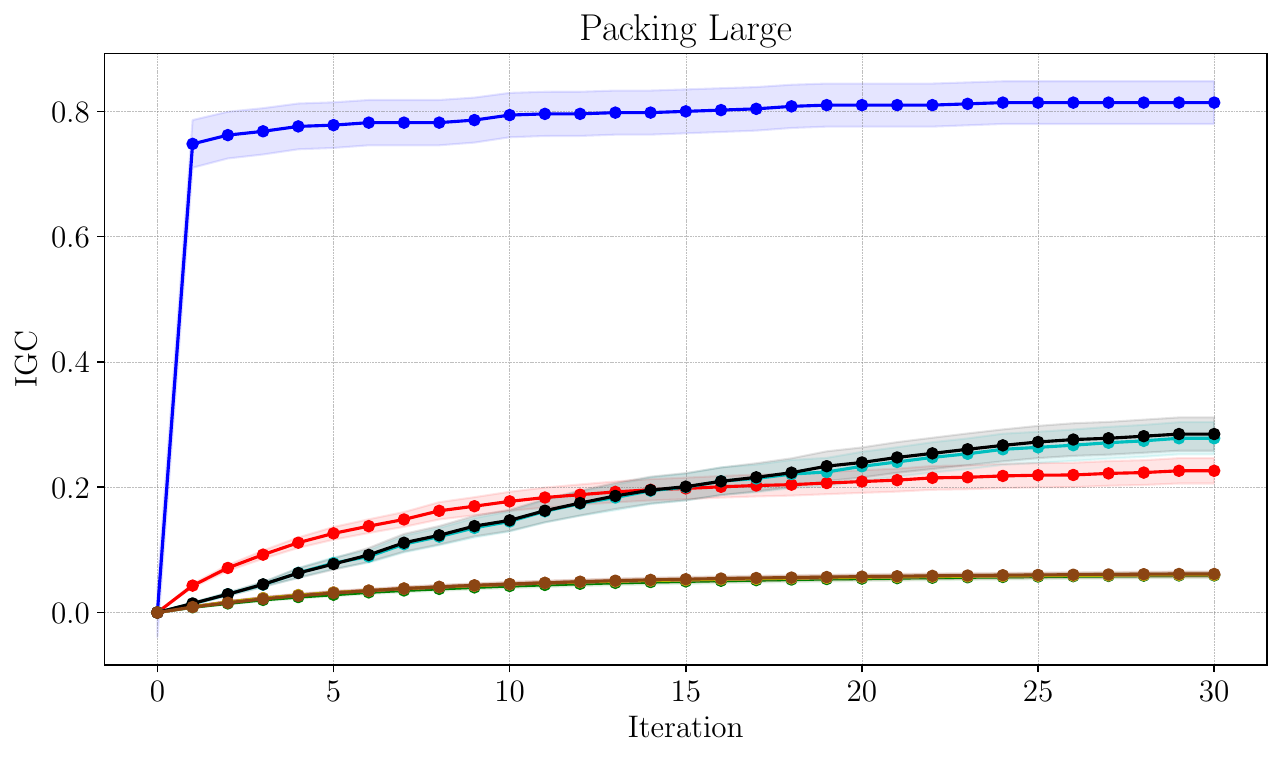}
\includegraphics[width=0.33\textwidth]{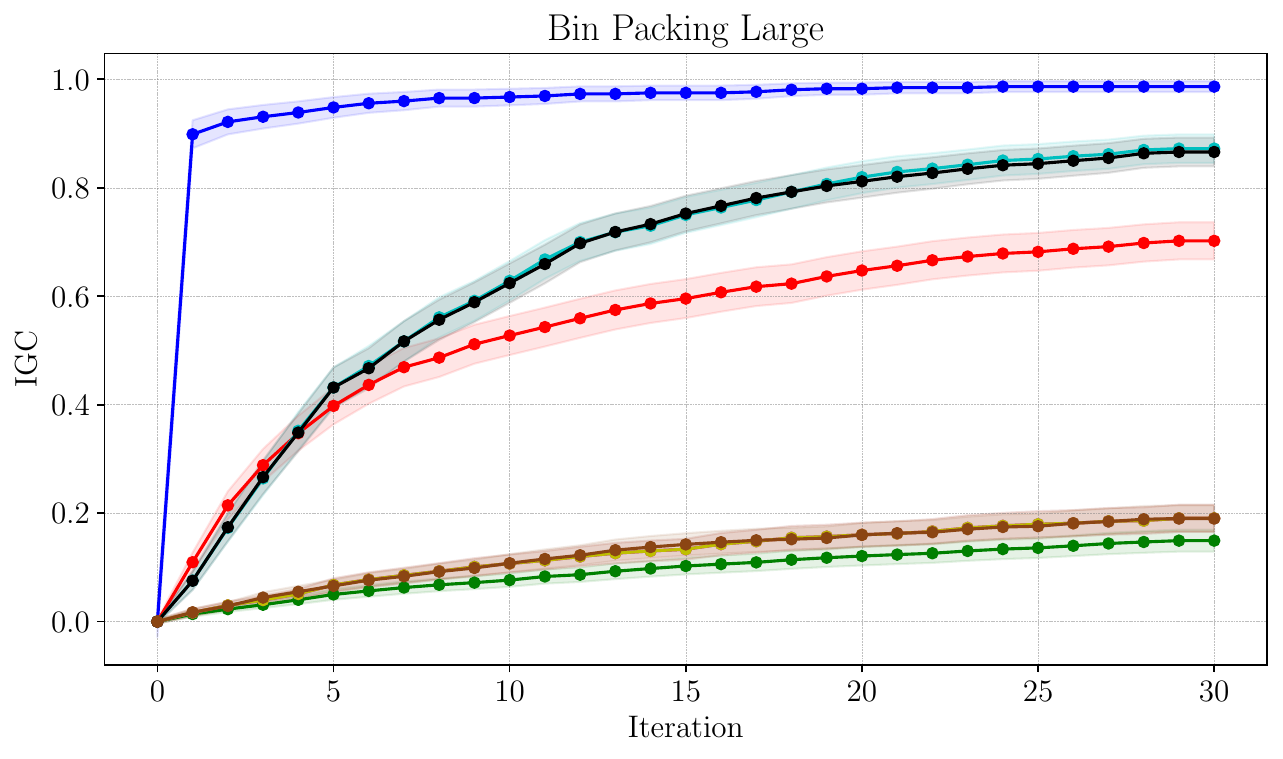}
\includegraphics[width=0.33\textwidth]{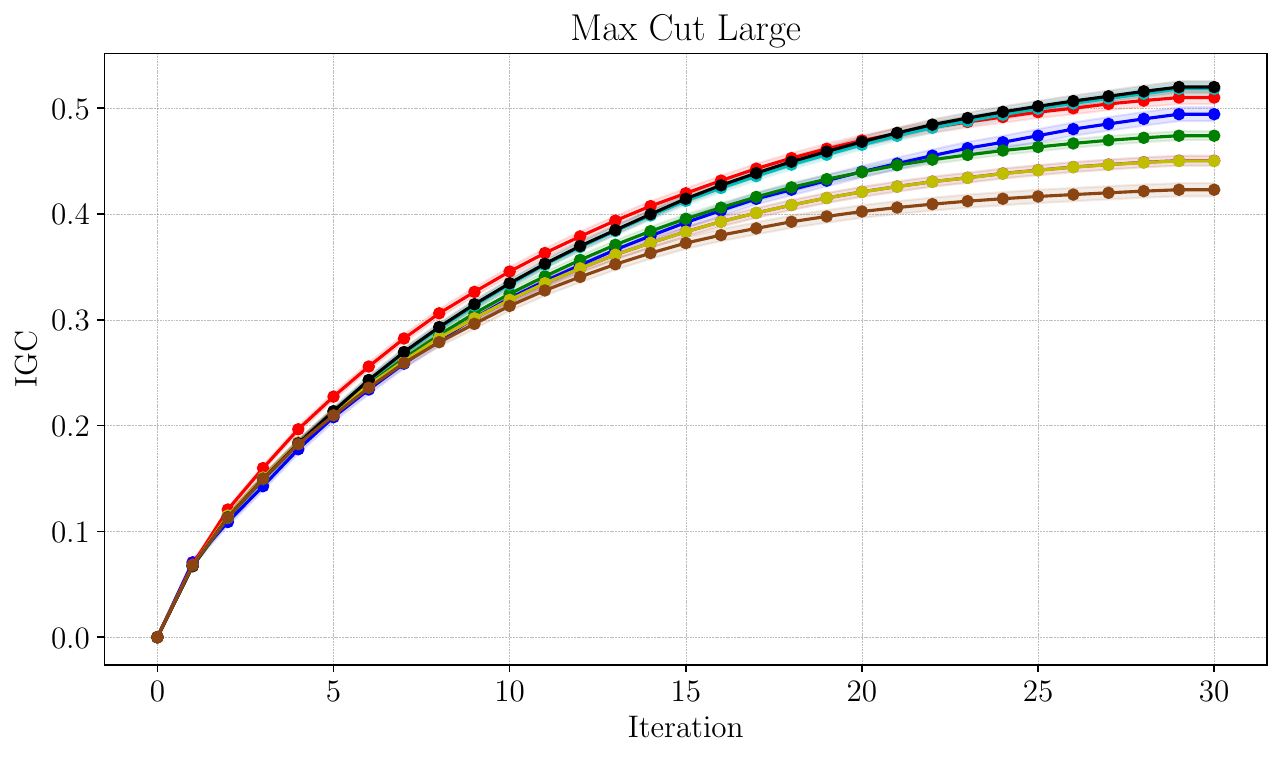}\\
\includegraphics[width=0.33\textwidth]{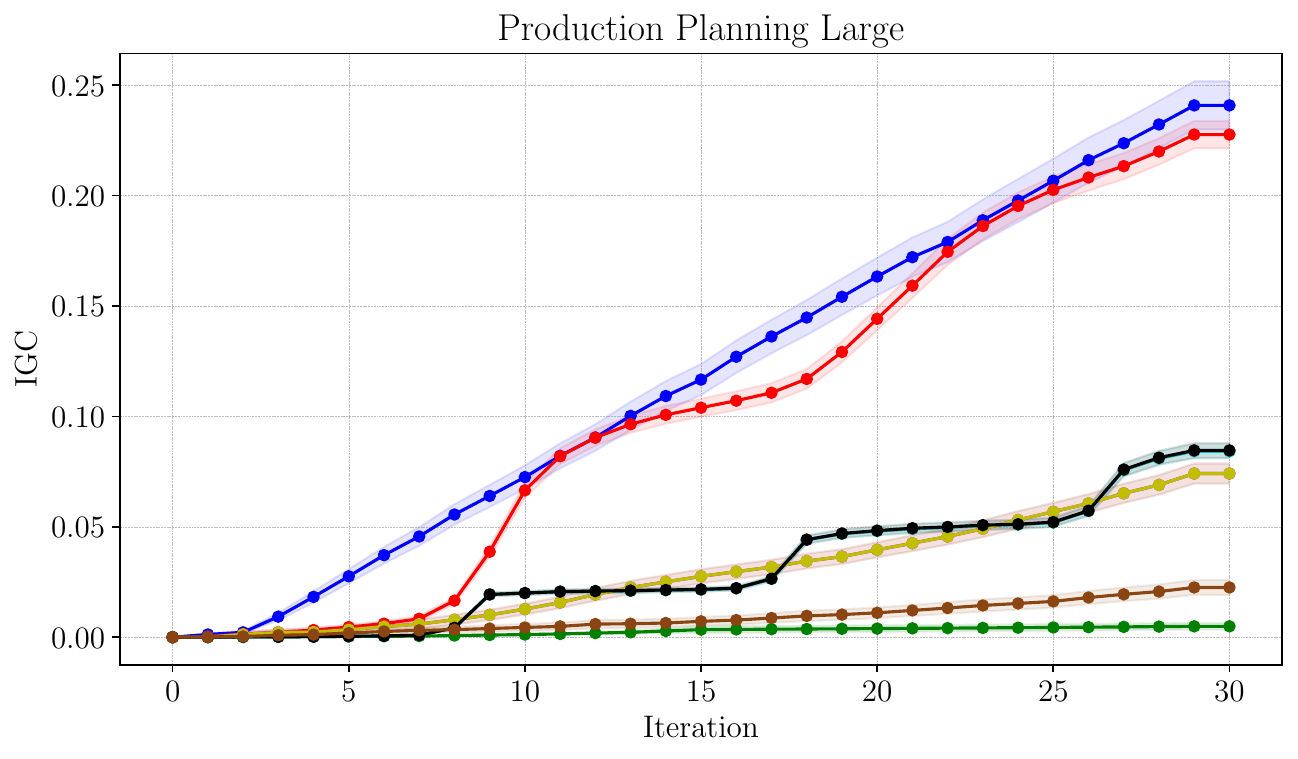}
\includegraphics[width=0.33\textwidth]{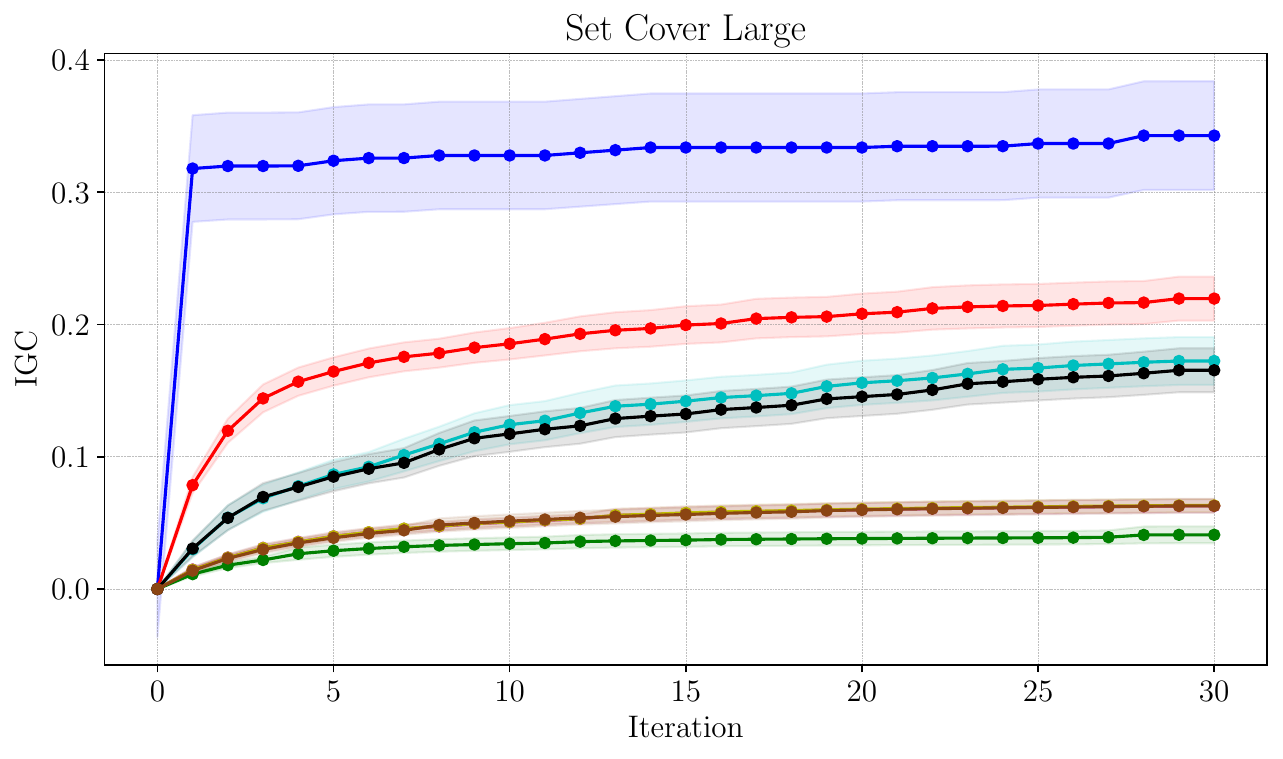}\\
\vspace{-10pt}
\caption{\textbf{Mean IGC for out-of-distribution instances} 
Our cut removal algorithm outperforms or matches all cut addition methods in all benchmarks. Our method shows a much stronger generalization ability onto the larger instances. The scaling improvement is especially accentuated in packing, bin packing and set cover.}
\label{fig:Experiment_2}
\end{figure*}

\subsection{Generalization on Larger Instances}
After having studied the performance that cut removal acting yields we aim to evaluate how well do our trained policies generalize to larger instances. We aim to answer the following questions: Can our cut removal acting algorithm with a model trained on smaller instances generalize for bigger instances? How does it compare with the previous benchmarks?

We remark that generalization to instances of lager size is a very desirable property since the annotation of the data becomes harder as the size increases.

\paragraph{ILP Benchmarks}
We use the same instances as in the first experiment (see subsection \ref{subsec:core-experiment}) to train the models. We run the evaluation with 500 fresh instances of larger problems $\sim50\%$ larger. Details on the specific dimensions are contained in Appendix \ref{subsec:dataset-dimensions}. 

\paragraph{Test Evaluation Metrics}
In order to asses which method is best we use the IGC as in the previous experiment.

\paragraph{Our Experimental Results}
The model $\pi_\theta$ is pretrained according to Section~\ref{subsec:core-experiment}.
Figure \ref{fig:Experiment_2} shows the test IGC for cut removal acting against the benchmark policies for packing, bin-packing, max cut, production planning and set cover.

Even after training $\pi_\theta$ on smaller instances than the ones used for testing, our cut removal algorithm outperforms all cut addition policies in packing, bin-packing, production planning and set cover families. For packing, bin-packing and set cover the margin betweeen cut removal acting and the \textit{look-ahead} policy significantly grows when compared to the margins in training and testing in the medium instances. This shows that the cut removal algorithm scales better than the cut addition benchmarks for this families of instances. For production planning the margin is slightly decreased when compared to the results in the previous experiment as the \textit{look-ahead} policy matches the performance of the cut removal method in the last third of the iterations.

For the max cut family the performance is the same as in the previous experiment. Again, the cut removal acting does not outperform all cut addition methods but the differences in performance are very small and the random policy performs almost as well as the best policies in the first third of the iterations.

\section{Conclusion}
Cutting plane methods play a crucial role in solving Integer Linear Programs. Recent works have employed machine learning techniques to design cut addition methods that, after training, are able to ensure fast convergence to the optimal integral solution~\cite{tang2020reinforcement,paulus2022learning}. In this work, we propose a novel cutting plane method that, at each iteration, introduces multiple cutting planes which are subsequently removed based on the output of a well-trained machine learning model. Our experimental evaluations demonstrate that our method outperforms both human-based heuristics and more recent machine learning-based approaches for cut addition.

Several intriguing research directions emerge for future work. The first involves the use of models capable of capturing the joint combinatorial structure of sets of cuts. The second focuses on designing cutting plane methods that do not maintain a constant increase in the number of linear constraints from iteration to iteration, but rather select the number of linear constraints to add based on an appropriate model. We are confident these approaches have the potential to significantly enhance the convergence properties of modern cutting plane methods.

\section*{Impact Statement}
This paper presents work whose goal is to advance the field of Machine Learning. There are many potential societal consequences of our work, none which we feel must be specifically highlighted here.

\section*{Acknowledgements}
Authors acknowledge the constructive feedback of reviewers and the work of ICML'24 program and area chairs. ARO - Research was sponsored by the Army Research Office and was accomplished under Grant Number W911NF-24-1-0048.
Hasler AI - This work was supported by Hasler Foundation Program: Hasler Responsible AI (project number 21043).
SNF project – Deep Optimisation - This work was supported by the Swiss National Science Foundation (SNSF) under grant number 200021\_205011.
Stratis Skoulakis is supported by Innosuisse through an Innovation project (contract agreement 100.960 IP-ICT)

\bibliography{bibliography}
\bibliographystyle{icml2024}

\newpage
\appendix
\onecolumn
\section*{Contents of the Appendix}
We describe the contents of the supplementary material below:
\begin{itemize}
    \item In Appendix \ref{app:A}, we provide additional details on how the randomized ILP instances are generated and the description on how Gomory Cuts are obtained.
    \item In Appendix \ref{app:B}, we present additional implementation details including the dataset dimensions, definitions for both the baselines and the feature extraction step and the specification for the training hyperparameters.
    \item In Appendix \ref{app:C}, we evaluate the performance of our method in interaction with an ILP solver. We include performance results for both end-to-end and isolated cutting plane stage solving.
    \item In Appendix \ref{app:D}, we discuss runtime considerations and provide a side-by-side comparison on that axis.
    \item In Appendix \ref{app:E}, we explore how the cut quality is distributed in cutpools for each of the different families. We gain some insights on the experimental results in Section \ref{sec:experiments}
\end{itemize}
\section{ILP generation and primer on Gomory Cuts} \label{app:A}
\subsection{Integer Programming Domains} \label{subsec:integer-programming-domains}
We used instances from five integer programming domains. The first four: (i) packing, (ii) bin packing, (iii) maximum cut and (iv) production planning are the ones first suggested by \citet{tang2020reinforcement} and also used by \citet{paulus2022learning}. The exact mathematical formulation for the four first families is given in \citet{tang2020reinforcement}. We extend the benchmarks with instances from the set cover family. 

For set cover, we generate those instances probabilistically. Starting with $m$ elements and $n$ subsets, we add each element to a subset with probability $p$. After the full iteration, we achieve feasibility by ensuring: (i) that no subset is empty by adding a random element to empty subsets, (ii) that all elements are included in at least one subset by adding non-included elements to a random set. Let $E = \{e_1, e_2, \dots, e_n\}$ be a set of $n$ elements. Let $S_1, S_2, \dots, S_m$ be subsets of $E$ with associated costs $c_1, c_2, \dots, c_m$. Let $X_i$ be a random variable associated with each subset $S_i$. $X_i = 1$ if $S_i$ is in the solution, and $0$ otherwise. The ILP formulation is as follows:

\begin{align*}
\text{Minimize} \quad & \sum_{i=1}^{m} c_i X_i \\
\text{subject to} \quad & \forall e \in E, \sum_{i:e\in S_i} X_i \geq 1, \\
& \forall X_i, X_i \in \{0, 1\}.
\end{align*}

The constraints in (1) ensure that every element is present in at least one of the chosen subsets. The constraints in (2) indicate that every subset is either chosen or not. The objective function chooses a feasible solution with the minimum cost. We use $p=0.2$ and $\bm{c}=\bm{1}$ for our experiments.

\subsection{Generating Gomory Cuts} \label{subsec:gomory-cuts}
When an LP is tackled using a simplex algorithm, the primary step involves converting the original LP into standard form. This entails transforming all inequalities into equalities through the introduction of slack variables:

\[
\begin{aligned}
    &\text{minimize} \quad c^T x \\
    &\text{subject to} \quad Ax + Is = b, \\
    &\text{and} \quad x \geq 0, \quad s \geq 0,
\end{aligned}
\]

where $I$ denotes an identity matrix, and $s$ represents the set of slack variables. The simplex method iterates on the tableau formed by $[A, I]$, $b$, and $c$. Upon convergence, the simplex method furnishes a final optimal tableau composed by a constraint matrix $L$ with a constraint vector $v$. A Gomory cut in the standard form space is generated by utilizing the row of the tableau corresponding to a fractional variable in the optimal solution $x^\star$. For each fractional element $x^\star_i$ of $x^\star$ we can generate a Gomory cut
\begin{equation}
    (- L_i + \lfloor L_i \rfloor)^T x \leq - v_i + \lfloor v_i \rfloor,
    \label{eq:cut}
\end{equation}
where $L_i$ is the $i$-th row of the matrix $L$ and $\lfloor \cdot \rfloor$ means component-wise rounding down. We can decompose the generated cuts cutting plane of the following form:

\begin{equation}
    e^T x + r^T s \leq d
    \label{eq:cutting-planes-slack-space}
\end{equation}

where $e, x \in \mathbb{R}^n$, $r, s \in \mathbb{R}^m$, and $d \in \mathbb{R}$. Despite the presence of slack variables, they can be eliminated by multiplying both sides of the linear constraints in \eqref{eq:cutting-planes-slack-space} by $r$:
\begin{equation}
    r^T Ax + r^T s = r^T b    
\end{equation}

and subtracting the new cutting plane (7) from the equation above. This results in an equivalent $\alpha \leq \beta$ cutting plane:

\begin{equation}
    (e^T - r^TA)x \leq d - r^Tb    
\end{equation}

This cutting plane exclusively involves variables within the original variable space. Slack variables do not provide additional information about the polytope and operations for the encoding described in \ref{subsec:feature-encoding} are defined in the original space.

\section{Implementation Details} \label{app:B}
\subsection{Dataset Dimensions} \label{subsec:dataset-dimensions}
\citet{tang2020reinforcement} consider three different sizes (small, medium large) for each domain. In \citet{paulus2022learning} the authors discard small and medium sizes because they are solved at presolving time or after adding a few number of cuts. Although we do not use pre-solving in our study out method has also shown to converge too fast for small and medium instances in \cite{tang2020reinforcement}. 

For the in-distribution experiment we generate $2000$ train, $500$ validation, $500$ test instances of the following dimensions:
\begin{itemize}
    \item Packing: $50$ variables, $50$ (resource) constraints.
    \item Bin Packing: $50$ variables, $50$ (resource) constraints + $50$ binary constraints.
    \item Max Cut: $|V|=9, |E|=25$.
    \item Production Planning: $T=10$.
    \item Set Cover: $|E|=35, |S|=35$.
\end{itemize}
Note that for each of the training and validation instances a trajectory of the $\textit{look-ahead}$ generates up to $30$ cutpools of size around $\frac{m+30}{2}$ where m is the number of constraints which leads to approximately $2000 \cdot 30 \cdot \frac{m+30}{2}$ training datapoints per family. For example, on packing $m=50$ this is approximately $24 \cdot 10^5$ datapoints.

For the generalization into larger instances experiment we generate $500$ test instances of the following dimensions:
\begin{itemize}
    \item Packing: $100$ variables, $100$ (resource) constraints.
    \item Bin Packing: $100$ variables, $100$ (resource) constraints + $100$ binary constraints.
    \item Max Cut: $|V|=14, |E|=40$.
    \item Production Planning: $T=15$.
    \item Set Cover: $|E|=50, |S|=50$.
\end{itemize}

\subsection{Baselines} \label{subsec:baseline-heuristics}
Consider $\mathcal{C}$ to be a cutpool. The baseline heuristics that we use are defined as follows:
\begin{itemize}
    \item Random: Choose $(\alpha_k, \beta_k) \in \mathcal{C}$ uniformly at random.
    \item Max Violation (MV): Let $x^\star$ be the basic feasible solution of the current LP. MV selects the cut corresponding to the maximum fractional component, this is the cut corresponding to the index  $i_s = \mathrm{argmax}_{i}\{|x^\star_i - \text{round}(x^\star_i)|\}$.
    \item Max Normalized Violation (MNV). Recall that $L$ denotes the optimal tableau returned by the simplex algorithm. Let $L_i$ be the $i$th row of $L$. Then, MNV selects the cut corresponding to index $i_s = \mathrm{argmax}_{i}\{|x^\star_i - \text{round}(x^\star_i)| / \Vert L_i \Vert\} $.
   \item Lexicographic: Add the first cut with fractional index $i_s = \mathrm{argmin}_{x^*_i \text{is fractional}}\{i\}$.
   \item Min Similar: Takes the cut $\mathrm{argmin}_{(\alpha_k,\beta_k) \in \mathcal{C}}\{(\alpha_k,\beta_k)^T c\}$ where $c$ is the objective coefficient vector.
\end{itemize}
\citet{WesselmannS12} is a useful resource for a more detailed description of heuristic cut selection rules.

\subsection{Feature Encoding} \label{subsec:feature-encoding}
We design 14 cut features to represent the state for the cut selection task. The first 13 follow \citet{wang2023learning, huang2021learning, WesselmannS12, Achterberg2007, dey}. The 14-th is a binary variable indicating if a cut belongs to the latest cutpool. Table \ref{tab:cut_features} provides a description of such features.

\begin{table}[t]
    \centering
    \caption{Designed cut features for a generated cut $(\alpha_k, \beta_k)$. $c$ is objective coefficient vector and $x^\star$ is the latest LP solution.}
    \resizebox{\textwidth}{!}{
    \begin{tabular}{c|c|c}
    \toprule
         Feature & Description & Number  \\ \midrule
         Cut Coefficients & Mean, Max, Min, Std of $(\alpha_k, \beta_k)$ & 4 \\ \midrule
         Objective Coefficients & Mean, Max, Min, Std of $c$ & 4 \\
         \midrule
         Parallelism & Parallelism between the objective and the cut $\frac{(\alpha_k, \beta_k)^T c}{|c| | (\alpha_k, \beta_k) |}$ & 1 \\ \midrule
         Efficacy & Euclidean distance of the cut hyperplane to $x^\star$ & 1 \\ \midrule
         Support & Proportion of non-zero coefficients of $(\alpha_k, \beta_k)$ & 1 \\ \midrule
         Integral Support & Proportion of non-zero coefficients with respect to integer variables of $(\alpha_k, \beta_k)$ & 1 \\ \midrule
         Normalized Violation & Violation of the cut to the current LP solution $\max \{0, \frac{\alpha_k^T x^\star - \beta_k}{|\beta_k|}\}$ & 1 \\
         \midrule
         Latest Cutpool & Wheter $(\alpha_k, \beta_k) \in \mathcal{C}_k$ or not & 1 \\
         \bottomrule
    \end{tabular}
    }
    \label{tab:cut_features}
\end{table}

\subsection{Training Hyperparameters}
We trained our models with SGD with a lr of $5 \cdot 10^{-3}$ for $50$ epochs using a batch size of $10^4$ with a patience parameter of $5$.

\section{Interfacing with an ILP Solver} \label{app:C}
As remarked in the experiments section \ref{rem:environment} our method is tested in a clean and isolated environment using an implementation of the cutting plane method from scratch. Nevertheless, we acknowledge the interest of evaluating our approach perform inside of an ILP Solver. As a result, we have incorporated our cut-removal approach in the SCIP \cite{Achterberg2007} solver which also enables solving larger instances as well as using other types of cuts implemented natively (Mixed Integer Gomory cuts, Strong Chvátal-Gomory cuts, Complemented Mixed Integer cuts and Implied Bound cuts).

In particular, we have developed an implementation of our method in the SCIP solver through the PySCIPOpt python interface and used it on the “Neural Network Verification” \cite{nair2020solving} dataset instances. More precisely, we evaluated the performance of the Branch-and-Cut mode of SCIP with the vanilla cut-addition policy (B\&C-Cut\_Addition) with the performance of Branch-and-Cut mode with our cut-removal approach (B\&C-Cut\_Removal).

\subsection{End-to-end Performance Comparison}
In Table \ref{tab:improvement} we present the percentage improvement of the solution found by B\&C-Cut\_Removal with respect to the solution found by B\&C-Cut\_Addition in 26 randomly selected instances with the node limit set to 100. Our experiments reveal that B\&C-Cut\_Removal finds on average a $35\%$ better solution than B\&C-Cut\_Addition. We also observe that B\&C-Cut\_Removal is able to find a better solution on $88.46\%$ of instances. In the remainder $11.54\%$ of instances both methods reach the same solution.

\begin{table}[htbp]
\centering
\caption{Improvement (\%) for Each Instance (id)}
\label{tab:improvement}
\setlength{\tabcolsep}{4pt}
\begin{tabular}{cccccccccccccc}
\toprule
 Instance & 1317 & 1891 & 1941 & 1987 & 2229 & 2891 & 2959 & 321 & 3736 & 3853 & 3964 & 4173 & 4329 \\
\midrule
Imp. (\%) & 27.87 & 4.69 & 0.00 & 125.80 & 46.38 & 76.00 & 22.55 & 17.32 & 22.75 & 74.23 & 20.69 & 0.00 & 3.52 \\
\midrule
\midrule
 Instance & 4743 & 495 & 5119 & 5424 & 5463 & 5757 & 5833 & 6392 & 6481 & 7064 & 8509 & 8627 & 8630 \\
\midrule
Imp. (\%) & 0.00 & 47.85 & 25.24 & 33.36 & 21.32 & 52.88 & 195.50 & 34.01 & 19.20 & 12.12 & 11.36 & 12.87 & 12.91 \\
\bottomrule
\end{tabular}
\end{table}

\subsection{Isolated Performance Comparison}
Branching algorithms can be interpreted by the tree they describe. The starting problem formulation relies on the top node of a tree. After making a branching decision, children problem formulations with more restrictive constraints appear. At each of this sub-problem formulations (nodes) the Cutting Plane procedure is invoked, this is where our Cut Removal Algorithm is executed.

We present a second experiment in order to evaluate the improvement that our method yields specifically at a single-node level in the Cutting-Plane stage of the SCIP solver by isolating all parts of the SCIP workflow except the cutting plane step. This setting is analogous to our Cutting Plane implementation but in SCIP. Interfacing with SCIP allows for extended capabilities such as having different kinds of cuts (besides vanilla Gomory Cuts) and having the problem modified through pre-solving at the starting node. At each Cutting Plane method call we measure the improvement of the LP bound of B\&C-Cut\_Removal against B\&C-Cut\_Addition. The results are contained in Table \ref{tab:mean_improvement}

\begin{table}[htbp]
\centering
\caption{Mean Improvement (\%) for Each Instance (id)}
\label{tab:mean_improvement}
\setlength{\tabcolsep}{4pt}
\begin{tabular}{cccccccccccccc}
\toprule
Instance & 8630 & 2891 & 3853 & 5424 & 4329 & 8627 & 6481 & 1941 & 3964 & 495 & 2959 & 4173 & 8509 \\
\midrule
MImp. (\%) & 137.83 & 29.69 & 0.09 & 0.69 & 109.31 & 276.44 & 3.09 & 0.01 & 47.71 & 0.37 & 22.20 & 0.00 & 31.97 \\
\midrule
\midrule
Instance & 7064 & 1317 & 5463 & 2229 & 5757 & 1891 & 4743 & 3736 & 1987 & 321 & 5833 & 5119 & 6392 \\
\midrule
MImp. (\%) & 91.77 & 117.55 & 322.77 & 59.97 & 38.47 & 0.51 & 2.44 & 0.00 & 8.21 & 1.78 & 9.61 & 102.77 & 35.61 \\
\bottomrule
\end{tabular}
\end{table}

\section{Runtime Considerations}\label{app:D}
Wall-clock time highly depends on external factors such as implementation and programming language (e.g., C++ vs Python/PyTorch). This variability is why the number of cutting planes (number of iterations) is considered a more robust evaluation metric and has also been adopted by \citet{paulus2022learning, tang2020reinforcement}. This being said, we acknowledge the interest in providing results for this metric. In Table \ref{tab:runtime}, we present the speedup that our method achieves when compared to the baselines in terms of wall-clock time. We measure the total time elapsed from start to finish when solving the instance. We consider instances where at least one of the policies converged. Max-Cut and Planning are not included in this table as none of the methods were able to achieve an ICG value of 1 (see Figure 2 in the paper). Thus, these problems do not permit a fair comparison. We notice our method attains an improvement over all baselines.

\begin{table}[htbp]
\centering
\caption{Performance Comparison}
\label{tab:runtime}
\begin{tabular}{lccc}
\toprule
Problem/Method & Paulus et al. & Look-ahead Expert & Other Non-neural Baselines \\
\midrule
Bin Packing & 1.14x & 24.59x & 2.71x \\
Packing & 1.61x & 24.30x & 2.15x \\
Set Cover & 5.12x & 47.26x & 5.41x \\
\bottomrule
\end{tabular}
\end{table}

We emphasize that our current implementation on cut removal could be further optimized with data structures. For this reason, we believe that the reported speedups could be improved further.

\section{Cut Pool Distributions} \label{app:E}
In this section we present results on the quality distribution for Gomory Cuts. We aim to answer the following questions: How is the quality in cuts distributed for different ILP families? How does this distribution evolve across the iterations? How do the cutpool quality distributions vary if acting with a random policy versus the \textit{lookahead} rule?

\paragraph{MILP Benchmarks} We collect trajectories of the random policy and \textit{lookahead} policy for a total of 500 instances per problem family. At each iteration $k$ we save each of the generated cuts $(\alpha, \beta)$, the previous LP value $x^\star_k$ and we calculate the LP value after adding the cut $x^\star_{k\cup (\alpha, \beta)}$.

\paragraph{Test Evaluation Metrics}
In section \ref{sec:methodology} we presented the bound improvement as a criteria to measure the quality of a cut $(\alpha, \beta)$.

Recall the formulation for the \textit{normalized LP improvement} by the previous LP value: 
\[\frac{c^T x^\star_{k\cup (\alpha,\beta)} - c^T x^\star_{k}}{c^T x^\star_{k}}.\]
This metric (M1) serves as an indicator on how valuable is a cut for the solution of the LP.

Consider also the similar construction but normalizing by the largest bound improvement instead.
\[\frac{c^T x^\star_{k\cup (\alpha,\beta)} - c^T x^\star_{k}}{\max_{(\tilde{\alpha},\tilde{\beta}) \in \mathcal{C}_k}\{c^T x^\star_{k\cup (\tilde{\alpha},\tilde{\beta})} - c^T x^\star_{k}\}} \in [0,1].\]
This metric (M2) serves as an indicator on how valuable is a cut with respect to its cutpool.

\paragraph{Results} For each problem family (packing, bin packing, maximum cut, production planning, set cover), each metric (M1, M2) and each policy generating the trajectory (random, \textit{look-ahead}) we compute distribution matrix $D$. Consider M(·) to be a function that calculates metric M for each element of an array and $\text{sortd}$(·) to be a function that sorts an array in decreasing order. Let $\mathcal{C}_k^{(l)}$ to be the cutpool generated at iteration $k$ for instance $l$. Then, the i-th row of $D$ is calculated by aggregating ${\text{sortd}(M(C_i^{(l)}))}$ and scaling. For the scaling note that for some problems the cutpool dimensions may vary across different instances and iterations. For this reason, we divide each component by the number of existing components in the cutpools in the aggregation. Figures \ref{fig:packing-distribution}, \ref{fig:binpacking-distribution}, \ref{fig:maxcut-distribution}, \ref{fig:planning-distribution}, \ref{fig:setcover-distribution} show the heatmaps for the different distribution matrices. 

The top plots (a), (b) for each figure explain how the "cut quality with respect to fellow cutpool cuts" distribution evolves across iterations for trajectories of the random and \textit{look-ahead} policies respectively. The starting cutpools are uneven for all problems and that picking the best cuts in the first iterations (\textit{look-ahead}) leads to more uniform cutpools compared with random picking in all problems except production planning where the uneven distribution holds.

The bottom plots (c), (d) show how the "cut quality with respect to previous LP value" distribution evolves across iterations for trajectories of the random and \textit{look-ahead} policies respectively. For trajectories of both policies most of the bound improvement with respect to the previous solution is yielded in the first iterations except for production planning where the magnitudes hold. This explains the results observed in Figures \ref{fig:Experiment_1}, \ref{fig:Experiment_2} where the IGC plateaus after the first iterations for all problems except for production planning where it shows a linear trend. We also observe that for the max cut the number of high quality cuts in the first iterations is significantly larger than in other families. This justifies the behaviour observed in the maxcut plot in Figures \ref{fig:Experiment_1}, \ref{fig:Experiment_2} where for the first iterations many policies perform as well as the best one.

\begin{figure}[ht]
    \centering

    \subfigure[]{
        \includegraphics[width=0.45\linewidth]{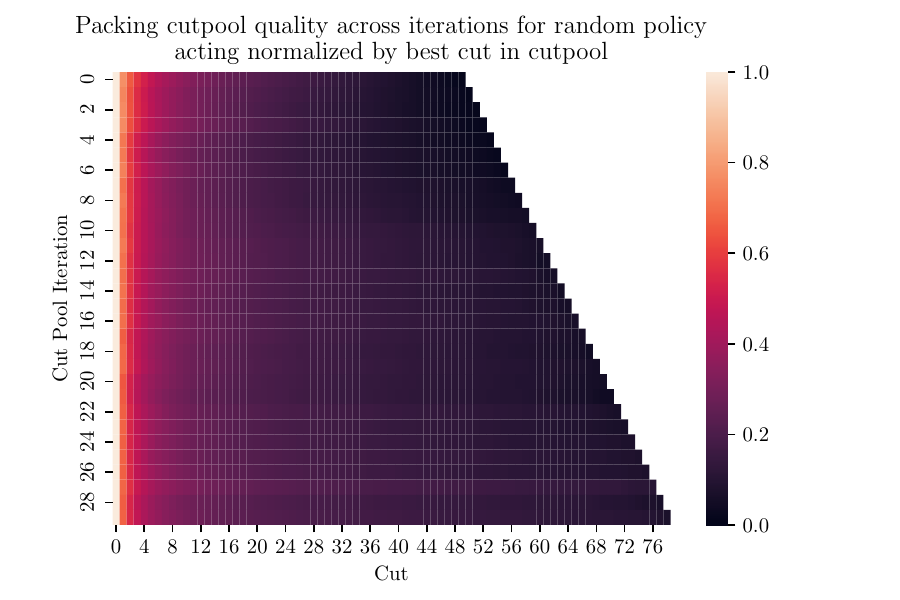}
        \label{fig:packing-distribution-random-maxnorm}
    }
    \hfill
    \subfigure[]{
        \includegraphics[width=0.45\linewidth]{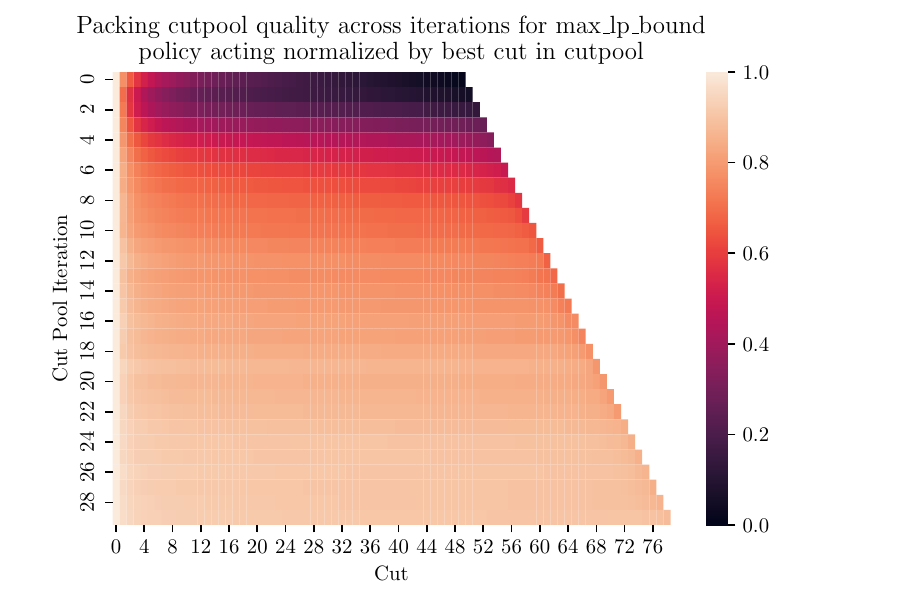}
        \label{fig:packing-distribution-maxlp-maxnorm}
    }

    \subfigure[]{
        \includegraphics[width=0.45\linewidth]{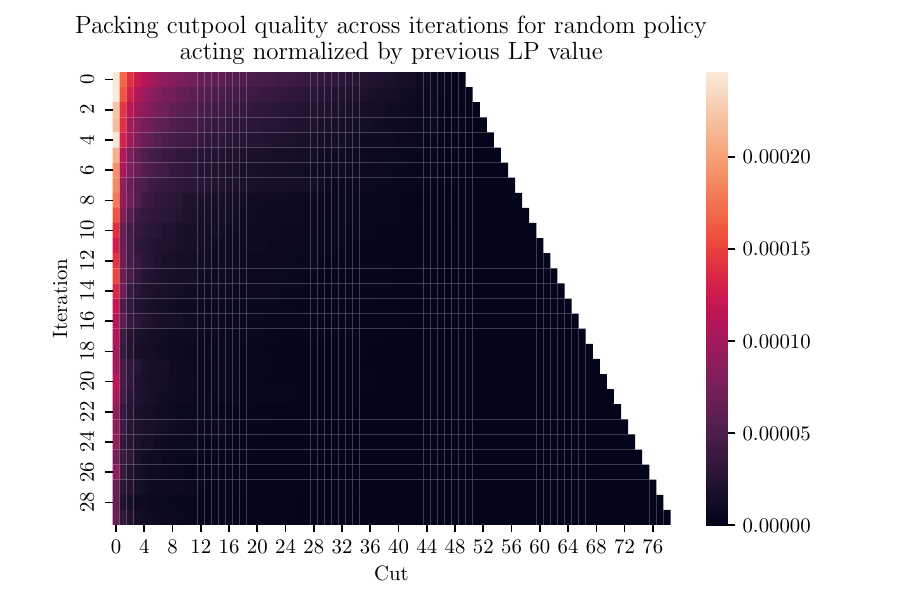}
        \label{fig:packing-distribution-random}
    }
    \hfill
    \subfigure[]{
        \includegraphics[width=0.45\linewidth]{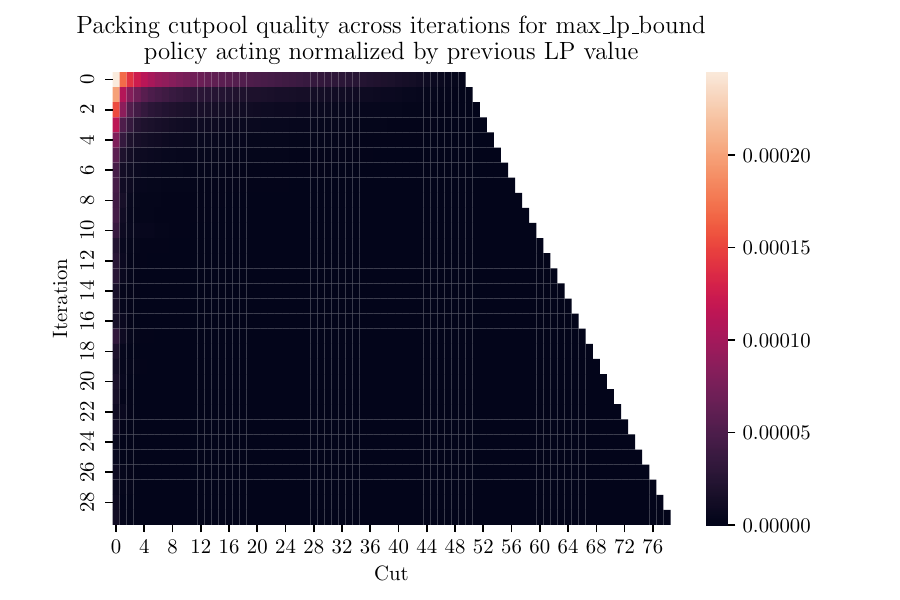}
        \label{fig:packing-distribution-maxlp}
    }

    \caption{Packing Cutpool Distributions}
    \label{fig:packing-distribution}
\end{figure}

\begin{figure}[ht]
    \centering

    \subfigure[]{
        \includegraphics[width=0.45\linewidth]{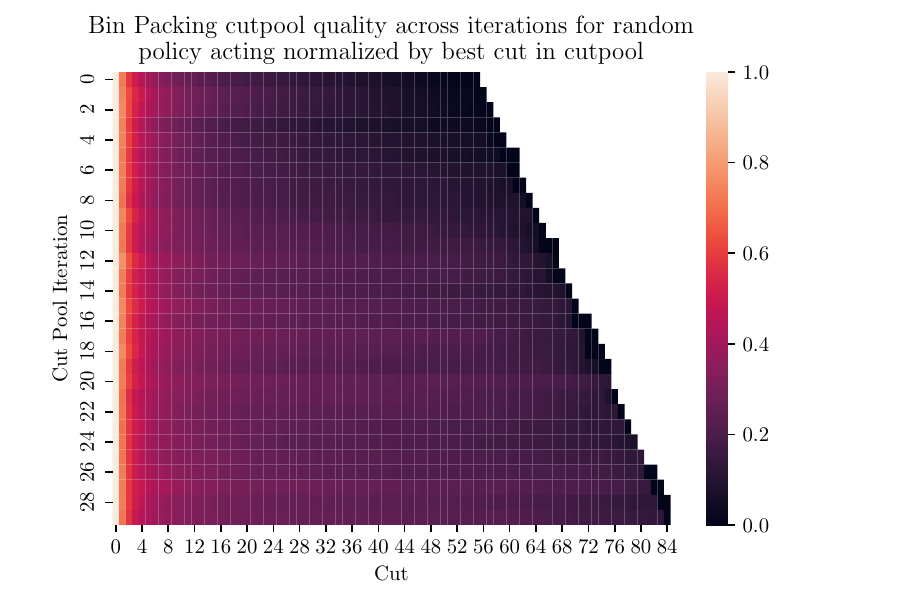}
        \label{fig:binpacking-distribution-random-maxnorm}
    }
    \hfill
    \subfigure[]{
        \includegraphics[width=0.45\linewidth]{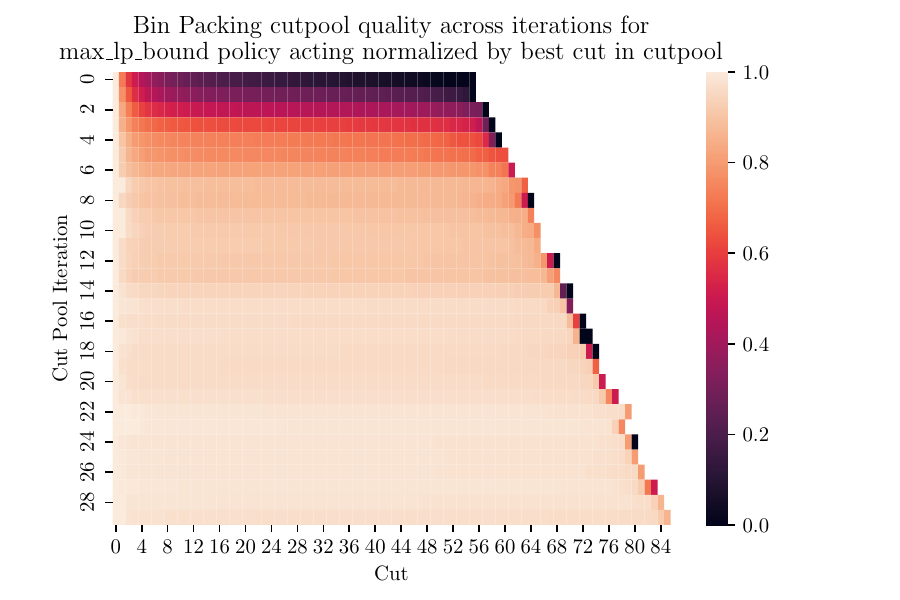}
        \label{fig:binpacking-distribution-maxlp-maxnorm}
    }

    \subfigure[]{
        \includegraphics[width=0.45\linewidth]{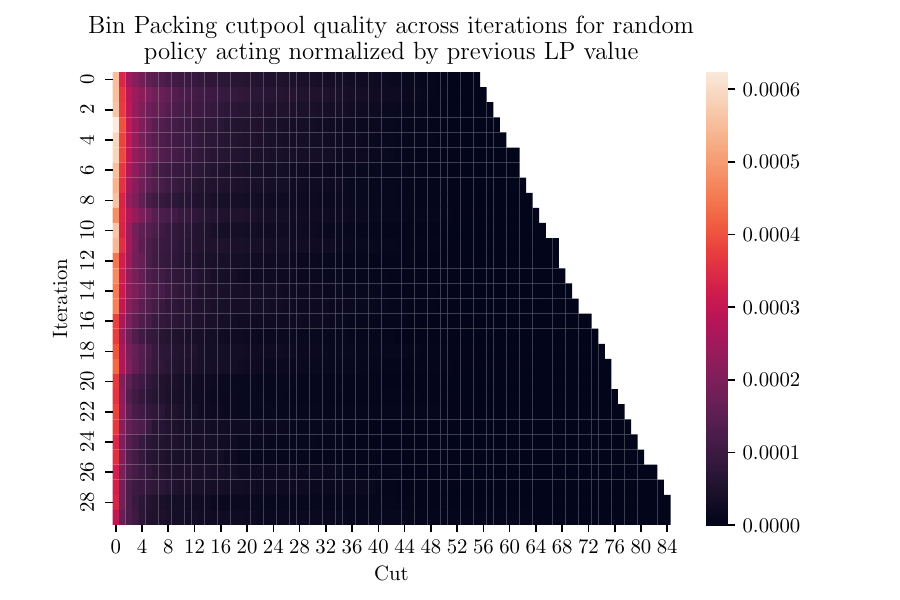}
        \label{fig:binpacking-distribution-random}
    }
    \hfill
    \subfigure[]{
        \includegraphics[width=0.45\linewidth]{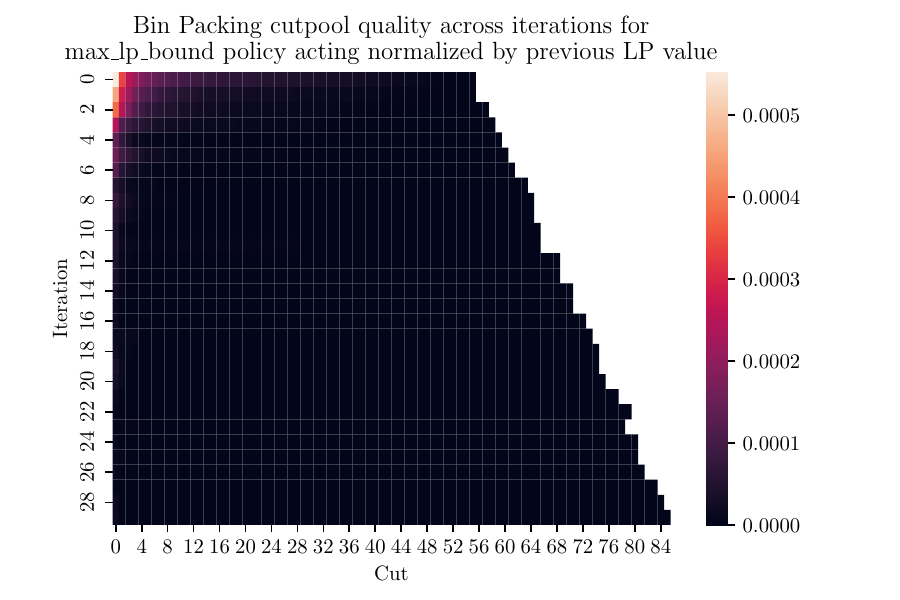}
        \label{fig:binpacking-distribution-maxlp}
    }

    \caption{Bin Packing Cutpool Distributions}
    \label{fig:binpacking-distribution}
\end{figure}

\begin{figure}[ht]
    \centering

    \subfigure[]{
        \includegraphics[width=0.45\linewidth]{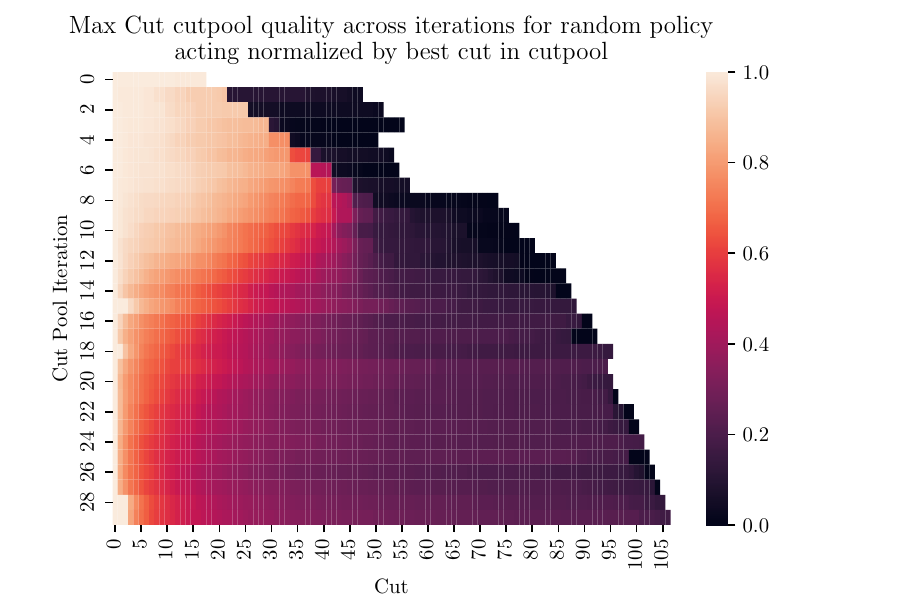}
        \label{fig:maxcut-distribution-random-maxnorm}
    }
    \hfill
    \subfigure[]{
        \includegraphics[width=0.45\linewidth]{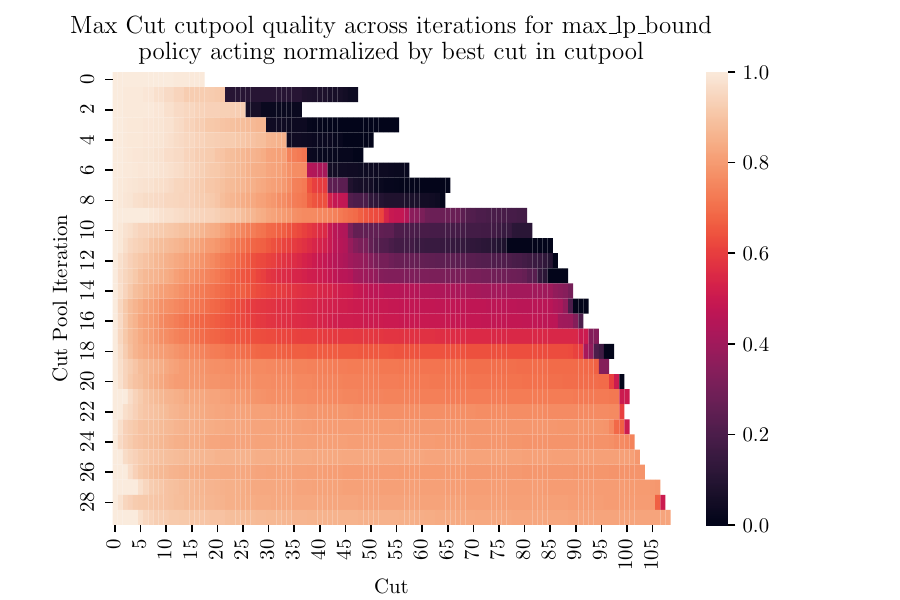}
        \label{fig:maxcut-distribution-maxlp-maxnorm}
    }

    \subfigure[]{
        \includegraphics[width=0.45\linewidth]{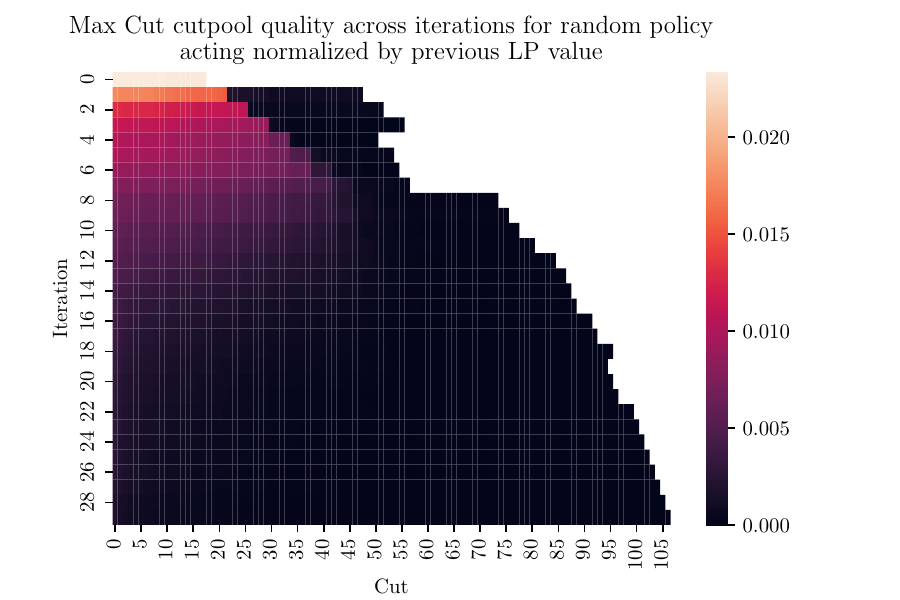}
        \label{fig:maxcut-distribution-random}
    }
    \hfill
    \subfigure[]{
        \includegraphics[width=0.45\linewidth]{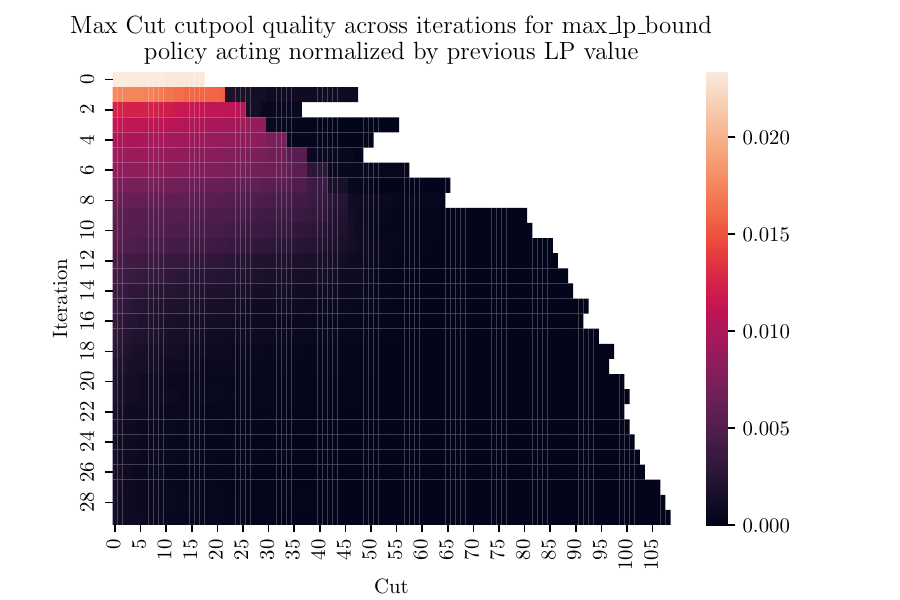}
        \label{fig:maxcut-distribution-maxlp}
    }

    \caption{Max Cut Cutpool Distributions}
    \label{fig:maxcut-distribution}
\end{figure}

\begin{figure}[ht]
    \centering

    \subfigure[]{
        \includegraphics[width=0.45\linewidth]{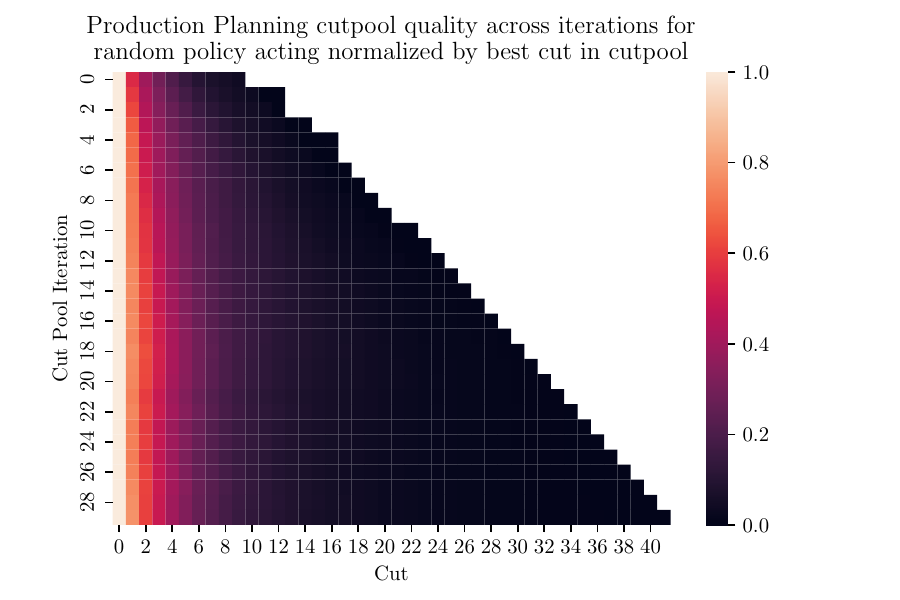}
        \label{fig:planning-distribution-random-maxnorm}
    }
    \hfill
    \subfigure[]{
        \includegraphics[width=0.45\linewidth]{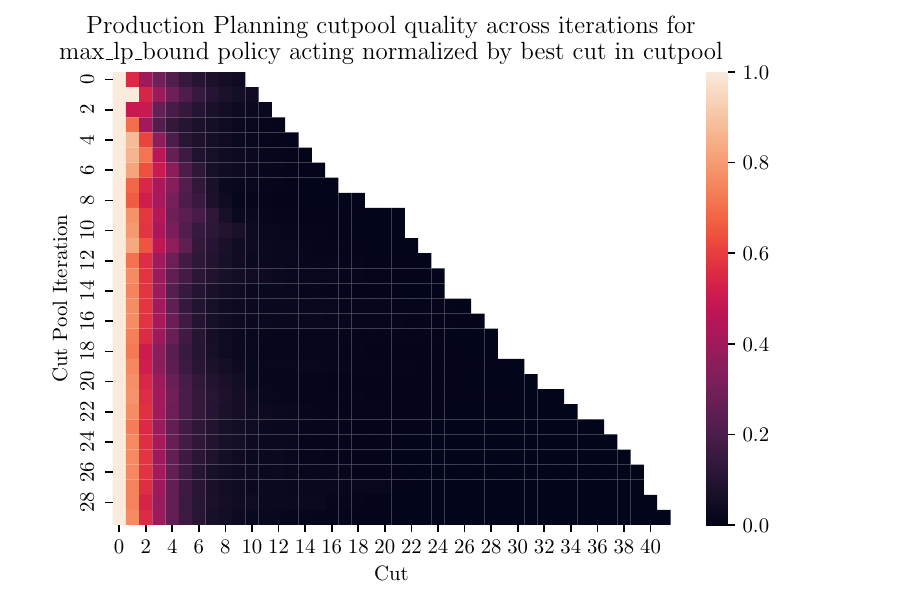}
        \label{fig:planning-distribution-maxlp-maxnorm}
    }

    \subfigure[]{
        \includegraphics[width=0.45\linewidth]{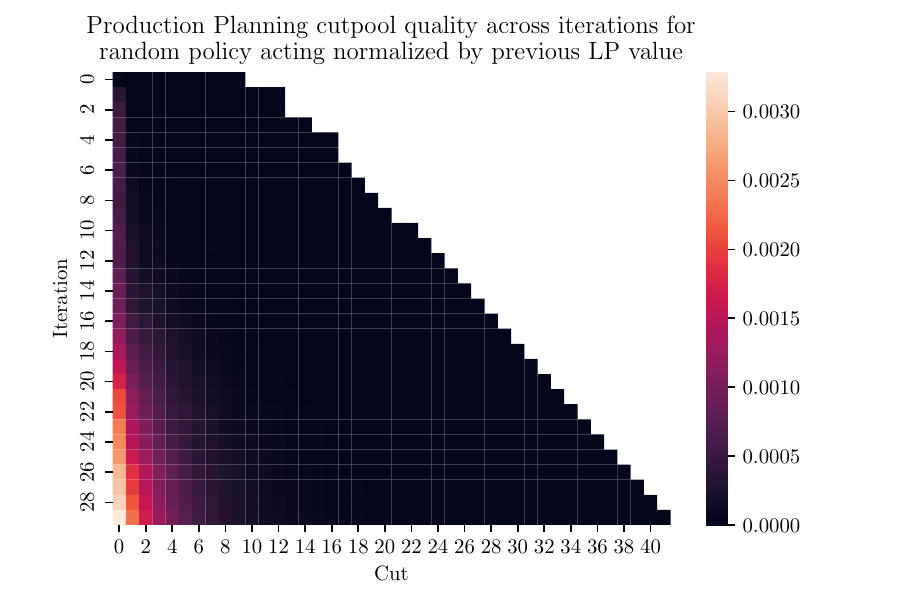}
        \label{fig:planning-distribution-random}
    }
    \hfill
    \subfigure[]{
        \includegraphics[width=0.45\linewidth]{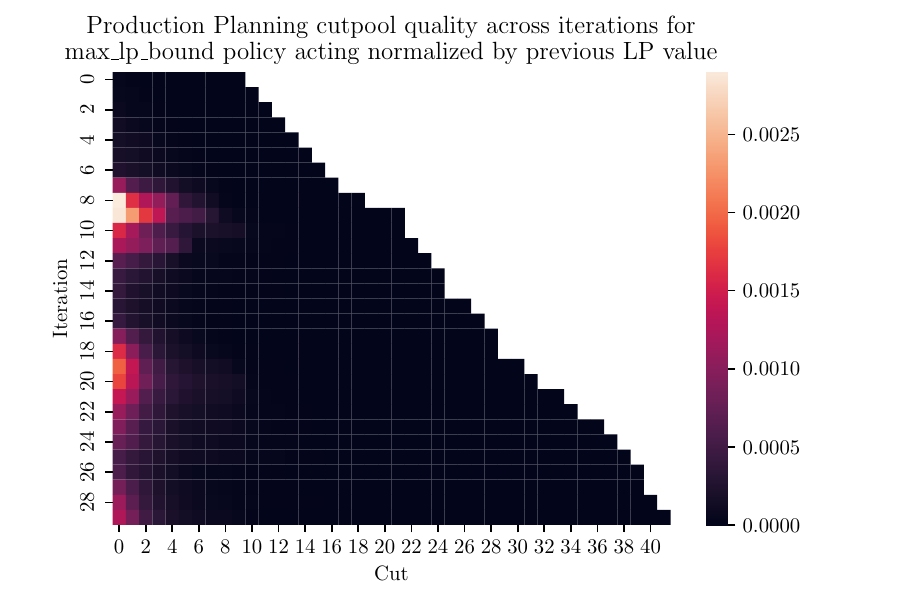}
        \label{fig:planning-distribution-maxlp}
    }

    \caption{Production Planning Cutpool Distributions}
    \label{fig:planning-distribution}
\end{figure}

\begin{figure}[ht]
    \centering

    \subfigure[]{
        \includegraphics[width=0.45\linewidth]{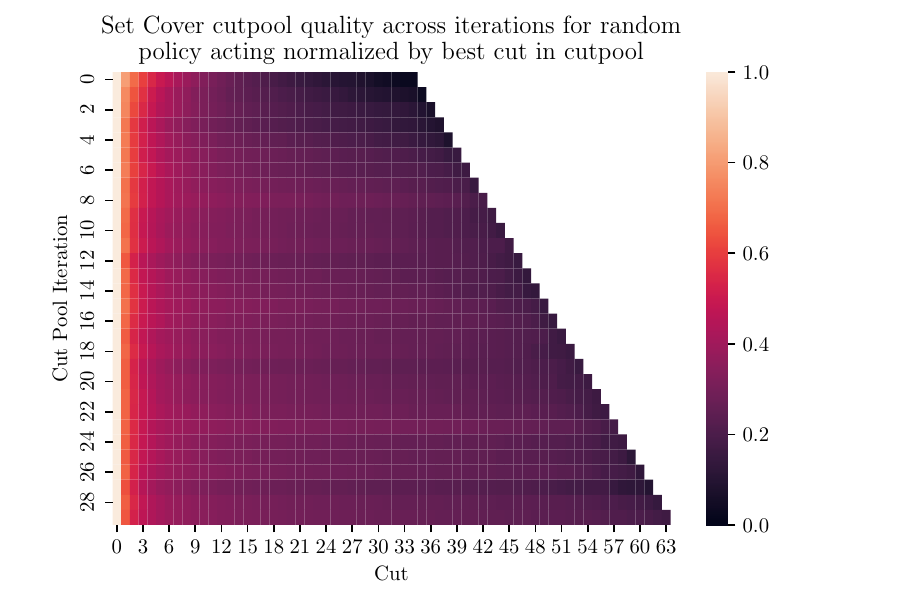}
        \label{fig:setcover-distribution-random-maxnorm}
    }
    \hfill
    \subfigure[]{
        \includegraphics[width=0.45\linewidth]{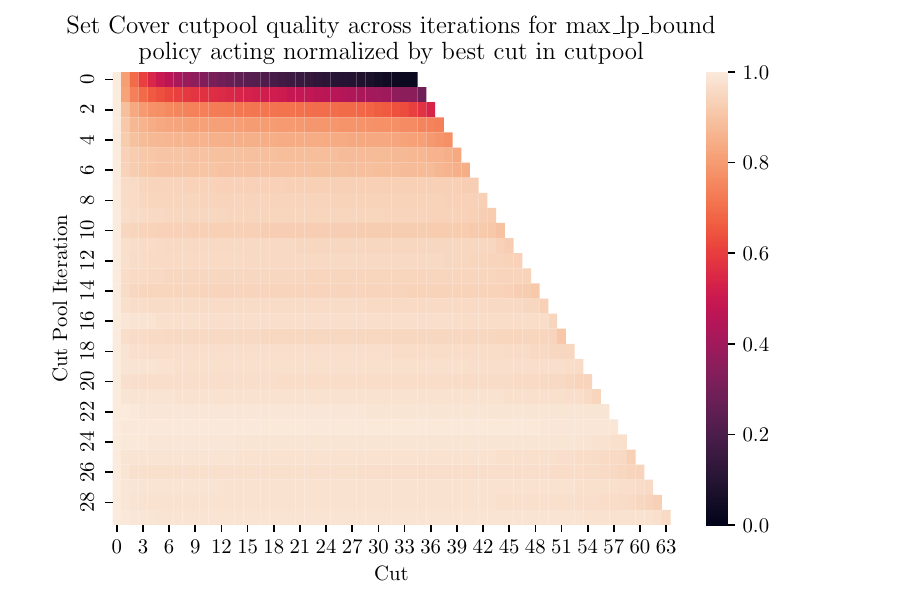}
        \label{fig:setcover-distribution-maxlp-maxnorm}
    }

    \subfigure[]{
        \includegraphics[width=0.45\linewidth]{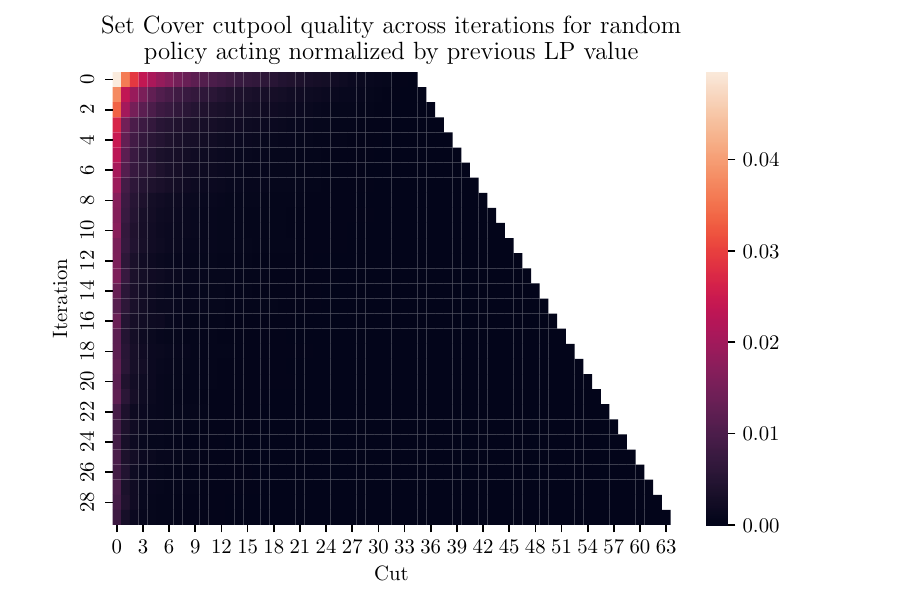}
        \label{fig:setcover-distribution-random}
    }
    \hfill
    \subfigure[]{
        \includegraphics[width=0.45\linewidth]{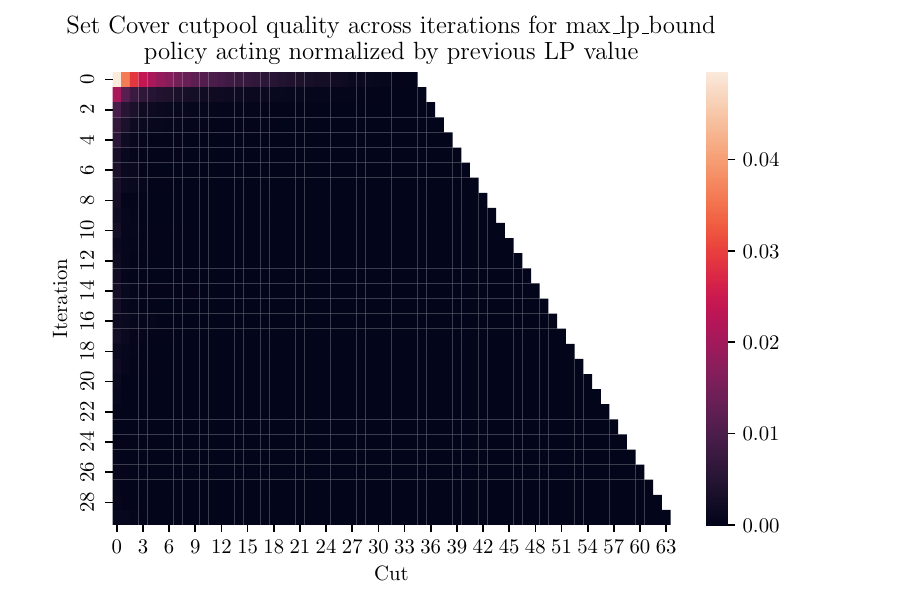}
        \label{fig:setcover-distribution-maxlp}
    }

    \caption{Set Cover Cutpool Distributions}
    \label{fig:setcover-distribution}
\end{figure}

\end{document}